\newtheorem{thm}{Theorem}[section]
\newtheorem{lem}[thm]{Lemma}
\newtheorem*{claim}{Claim}
\theoremstyle{definition}
\newcommand{\co}{\colon}
\newcommand{\id}{\mathrm{id}}
\newcommand{\opp}{\mathrm{op}}
\newcommand{\cc}{\mathcal{C}}
\newcommand{\zz}{\mathcal{Z}}
\newcommand{\RR}{\mathbb{R}}
\newcommand{\XX}{\mathbf{X}}
\newcommand{\YY}{\mathbf{Y}}
\newcommand{\Ker}{\mathrm{Ker}}
\newcommand{\kk}{\Bbbk}
\newcommand{\kt}{$\Bbbk$\nobreakdash-\hspace{0pt}}
\newcommand{\ti}{\mbox{-}\,}
\newcommand{\R}{\mathbb{R}}
\newcommand{\un}{\mathbb{1}}
\newcommand{\Int}{\mathrm{Int}}
\newcommand{\Reg}{\mathrm{Reg}}
\newcommand{\End}{\mathrm{End}}
\newcommand{\Hom}{\mathrm{Hom}}
\newcommand{\tr}{\mathrm{tr}}
\newcommand{\rank}{\mathrm{rank}}
\newcommand{\inv}{\mathbb{F}}
\newcommand{\lev}{\mathrm{ev}}
\newcommand{\rev}{\widetilde{\mathrm{ev}}}
\newcommand{\lcoev}{\mathrm{coev}}
\newcommand{\rcoev}{\widetilde{\mathrm{coev}}}
\newcommand{\ldual}[1]{#1^{*}}
\newcommand{\coul}[1]{\mathrm{Col}(#1)}
\newcommand{\scaledraw}[1]{A}
\newcommand{\scaleraisedraw}[2]{A}
\newcommand{\rsdraw}[3]{\raisebox{-#1\height}{\scalebox{#2}{\includegraphics{#3.eps}}}}
\newcommand{\labela}{\renewcommand{\labelenumi}{{\rm (\alph{enumi})}}}
\newcommand{\labeli}{\renewcommand{\labelenumi}{{\rm (\roman{enumi})}}}
\begin{document}

\title[On   3-dimensional Homotopy Quantum Field Theory, I]{On   3-dimensional Homotopy Quantum\\ Field Theory, I}
\author[V. Turaev]{Vladimir Turaev}
 \address{%
 Vladimir Turaev\newline
  \indent            Department of Mathematics, \newline
\indent  Indiana University \newline
                     \indent Bloomington IN47405 \newline
                     \indent USA \newline
\indent e-mail: vtouraev@indiana.edu}
\author[A. Virelizier]{Alexis Virelizier}
\address{%
 Alexis Virelizier\newline
     \indent         Department of Mathematics, \newline
\indent    University of Montpellier II\newline
                     \indent 34095 Montpellier Cedex 5 \newline
                     \indent France \newline
\indent e-mail:  virelizi@math.univ-montp2.fr} \subjclass[2010]{
 57M27, 18D10, 57R56}
\date{\today}

\begin{abstract}
Given a discrete group $G$ and a spherical $G$-fusion category whose
neutral component has invertible dimension,   we use the state-sum
method to construct a 3-dimensional Homotopy Quantum Field Theory
with target the Eilenberg-MacLane space $K(G,1)$.
\end{abstract}
\maketitle

\setcounter{tocdepth}{1} \tableofcontents

\section{Introduction}\label{sec-Intro}

Homotopy Quantum Field Theory (HQFT)
is a branch of quantum topology concerned with  maps from manifolds
to a fixed target space. The aim   is to define and to study
homotopy invariants of such maps using  methods of quantum topology.
A formal notion of an HQFT was introduced in the monograph
\cite{Tu1} which treats in detail the 2-dimensional case. The
present paper focuses on 3-dimensional HQFTs with target the
Eilenberg-MacLane space $K(G,1)$ where $G$ is a discrete group.
These HQFTs  generalize more familiar 3-dimensional Topological
Quantum Field Theories (TQFTs) which correspond to the case $G=1$.

Two fundamental constructions of 3-dimensional  TQFTs  are due   to
Reshetikhin-Turaev   and
  Turaev-Viro.  The RT-construction may be viewed as a
mathematical realization of Witten's Chern-Simons TQFT. The
TV-construction is closely related to the Ponzano-Regge state-sum
model for   3-dimensional quantum gravity.   The Turaev-Viro
\cite{TV}
  state sum approach, as generalized by Barrett and Westbury
\cite{BW} (see also \cite{GK}), derives TQFTs from spherical fusion
categories.  In this paper, we extend the state sum approach to the
setting of HQFTs. Specifically, we show that any spherical
$G$-fusion category $\cc$ (of invertible dimension) gives rise to a
3-dimensional HQFT $\vert \cdot\vert_\cc$ with target $K(G,1)$.

As in the above mentioned papers, we represent 3-manifolds by their
skeletons. The maps to $K(G,1)$ are represented by certain labels on
the faces of the skeletons. The HQFTs are obtained by taking
appropriate state sums on the skeletons.  In distinction to the
earlier papers, we entirely avoid the use of $6j$-symbols and allow
non-generic skeletons, i.e., skeletons with edges incident to $\geq
4$ regions. In the case $G=1$ this approach was introduced in
\cite{TVi}.

This paper is the first in a series of papers in which we will
establish the following further results. We will show that the
Reshetikhin-Turaev surgery method also extends to HQFTs: any $G$-modular category
$\cc$ determines a 3-dimensional HQFT~$\tau_\cc$. We
will   generalize  the center construction for categories to
$G$-categories and show that the $G$-center $\zz_G(\cc)$ of a
spherical $G$-fusion category $\cc$ over an algebraically closed
field of characteristic zero is a $G$-modular category. Finally, we
will show that under these assumptions on $\cc$, the HQFTs
  $\vert
\cdot\vert_\cc$ and $\tau_{\zz_G(\cc)}$ are isomorphic. This theorem
is non-trivial already for $G=1$. In this case it was first
established by the present authors in \cite{TVi} and somewhat later
but independently by   Balsam and  Kirillov  \cite {KB}, \cite{Ba1},
\cite{Ba2}. The case of an arbitrary $G$ is considerably   more
difficult; it will be treated in the sequel.

The content of this paper is as follows. We   recall the notion of a
3-dimensional  HQFT in Section \ref{sec-HQFT}. Then we    discuss
various classes of monoidal categories and in particular $G$-fusion categories (Sections \ref{sec-prelimoncat} and
\ref{sec-Gfusioncat}). In Section \ref{sec-multimodulesandgraphs} we
consider symmetrized multiplicity modules in categories and
invariants of planar graphs needed for our state sums. In Section
\ref{sec-skeletonsstatesums} we discuss skeletons of 3-manifolds and
presentations of maps to $K(G,1)$ by labelings of skeletons. We use
these presentations in Section \ref{state-suminvariabnts ofclosed}
to derive from any   $G$-fusion category a numerical invariant  of
maps from closed 3-manifolds to $K(G,1)$. In the final Section
\ref{sec-TQFTfin} we extend these numerical invariance to an  HQFT
with target $K(G,1)$.  In the appendix we briefly discuss
push-forwards of categories and HQFTs.

Throughout the paper,  we fix  a (discrete) group $G$ and   an
Eilenberg-MacLane  space  $\XX$ of type $K(G,1)$ with base point
$x$.  Thus, $\XX$ is a   connected aspherical CW-space such that
$\pi_1(\XX,x)=G$. The symbol  $\kk$ will denote a commutative ring.

\section{Three-dimensional HQFTs}\label{sec-HQFT}

  We recall    following \cite{Tu1}
 the definition of a $3$-dimensional Homotopy
Quantum Field Theory (HQFT)  with target $\XX=K(G,1)$. Warning:  our
terminology here is adapted to the 3-dimensional case and differs
  from that  in \cite{Tu1}.

\subsection{Preliminaries on  $G$-surfaces and $G$-manifolds}\label{Preliminaries on  TQFTs}
 A topological space  $\Sigma$ is
  {\it pointed}   if   every  connected   component  of $\Sigma$    is provided with
a base point.  The set of base points of $\Sigma$ is denoted
$\Sigma_\bullet$. By a  {\it   $G$-surface}  we mean  a pair (a
pointed closed oriented smooth surface $\Sigma$, a homotopy class of
maps $f\co (\Sigma, \Sigma_\bullet)\to (\XX,x)$).     Reversing
orientation in a $G$-surface $(\Sigma,f)$, we obtain  a $G$-surface
$(-\Sigma, f)$. By convention, an empty set is   a $G$-surface with
unique orientation.

By a   {\it   $G$-manifold}  we    mean  a pair (a compact oriented
smooth $3$-dimensional manifold $M$ with pointed  boundary,
 a homotopy class of maps  $f\co  (M, (\partial
M)_\bullet) \to (\XX,x)$).  The manifold $M$ itself is not required
to be
  pointed. The boundary $(\partial M, f \vert_{\partial M} )$  of  a $G$-manifold $(M,f)$  is a $G$-surface.
We use  the \lq\lq outward vector first" convention  for the induced
orientation of the boundary: at any point of $ \partial M $ the
given orientation of $M$ is determined by the tuple (a tangent
vector directed outward, a basis in the tangent space of $\partial
M$ positive with respect to the induced orientation). A $G$-manifold
$M$ is {\it closed} if $\partial M=\emptyset$ (in this case
$(\partial M)_\bullet=\emptyset$).

Any $G$-surface $(\Sigma,f)$ determines the {\it cylinder
$G$-manifold}  $ (\Sigma\times [0,1], \overline f)$, where
$\overline f\colon \Sigma\times [0,1]\to \XX $ is the composition of
the projection to $  \Sigma$ with $f$. Here $\Sigma\times [0,1]$ has
the base points $\{(m,0), (m,1) \, \vert\, m\in \Sigma_\bullet\}$
and is oriented so that
 its oriented boundary is $  (-\Sigma\times \{0\}) \amalg  (\Sigma\times
 \{1\})$.

Disjoint unions of $G$-surfaces (respectively $G$-manifolds) are
$G$-surfaces (respectively $G$-manifolds) in the obvious way.  A
{\it  $G$-homeomorphism  of $G$-surfaces}  $ (\Sigma,f)\to
(\Sigma',f')$ is an orientation preserving diffeomorphism $g\colon
\Sigma\to \Sigma'$ such that $g(\Sigma_\bullet)=\Sigma'_\bullet$ and
$f =f'g  $. A {\it $G$-homeomorphism of $G$-manifolds} $ (M ,f)\to
(M' ,f') $ is an orientation preserving diffeomorphism $g\colon M
\to M' $ such that $g((\partial M)_\bullet)=(\partial M')_\bullet$
and $f =f'g $. In both cases, the equality $f =f'g$ is understood as
an equality of homotopy
  classes of maps.

  For brevity, we shall
usually omit the maps to $\XX$ from the notation for $G$-surfaces
and $G$-manifolds.

\subsection{The category of $G$-cobordisms}\label{category of cobs}  We
  define a  category of {\it 3-dimensional $G$-cobordisms}
$\mathrm{Cob}^G=\mathrm{Cob}_3^G$.
   Objects of $\mathrm{Cob}^G$ are $G$-surfaces.
   A morphism   $\Sigma_0 \to \Sigma_1$ in $\mathrm{Cob}^G$ is represented by  a pair (a $G$-manifold $M$,  a $G$-homeomorphism $ h\colon
    (-\Sigma_0) \sqcup\Sigma_1 \simeq  \partial M)$. We   call
    such pairs {\it $G$-cobordisms with bases $\Sigma_0 $ and $
    \Sigma_1$}.
 Two $G$-cobordisms $(M, {h}\co  (-\Sigma_0) \sqcup\Sigma_1
\to \partial M )$ and $(M', {h}' \co  (-\Sigma_0) \sqcup\Sigma_1 \to
\partial M')$ represent the same morphism   if
there is a $G$-homeomorphism  $g\co  M \to M'$ such that
${h}'=g{h}$. The identity morphism of a  $G$-surface $\Sigma$ is
represented by the cylinder $G$-manifold $ \Sigma \times [0,1]$ with
  tautological identification of the
boundary with $(-\Sigma) \sqcup\Sigma$. Composition of morphisms in
$\mathrm{Cob}^G$ is defined through gluing of $G$-cobordisms:    the
composition of morphisms $(M_0, {h}_0) \co \Sigma_0 \to \Sigma_1$
and $(M_1, {h}_1) \co \Sigma_1 \to \Sigma_2$  is    represented by
the $G$-cobordism  $(M,{h})$, where $M$ is the $G$-manifold obtained
by gluing
  $M_0$ and $M_1$ along ${h}_1 {h}_0^{-1} \co
{h}_0(\Sigma_1)   \to {h}_1({\Sigma_1})$ and $${h}={h}_0
\vert_{\Sigma_0} \sqcup {h}_1 \vert_{\Sigma_2} \co (-\Sigma_0)
\sqcup\Sigma_2 \simeq  \partial M.$$   The given maps $(M_i,
(\partial M_i)_\bullet) \to (\XX, x)$, $i=0,1$ may be chosen in
their homotopy classes to agree on ${h}_0(\Sigma_1)   \approx
{h}_1({\Sigma_1})$ and define thus a map   $(M, (\partial
M)_\bullet)\to (\XX,x)$. The asphericity of $\XX$ ensures that the
homotopy class of this map is well defined.

  The category $\mathrm{Cob}^G$ is   a symmetric monoidal category
with tensor product   given by   disjoint union of $G$-surfaces and
$G$-manifolds.
The unit object of $\mathrm{Cob}^G$ is the empty $G$-surface
$\emptyset$.

\subsection{HQFTs}\label{HQFTs} Let $\mathrm{vect}_\kk$ be the category
of finitely generated projective $\kk$-modules and
$\kk$-homomorphisms.  It is a symmetric monoidal category with
standard tensor
  product and unit object $\kk$. A  (3-dimensional)   {\it Homotopy Quantum Field Theory (HQFT)   with target $\XX$}   is a symmetric monoidal functor
  $Z\co \mathrm{Cob}^G \to \mathrm{vect}_\kk$. In particular,   $Z( \Sigma  \sqcup \Sigma') \simeq Z(
\Sigma ) \otimes Z( \Sigma') $ for any  $G$-surfaces $\Sigma,
\Sigma'$, and similarly    for  morphisms. Also, $Z(\emptyset)\simeq\kk$. We refer to \cite{ML1} for a detailed definition of a strong monoidal functor.


Every $G$-manifold $M$ determines two  morphisms $\emptyset \to
\partial M$ and   $  -\partial M \to  \emptyset$ in
$\mathrm{Cob}^G$. The associated   homomorphisms $\kk\simeq Z(\emptyset)
\to Z (\partial M)$ and $ Z (-\partial M)\to  Z(\emptyset)\simeq\kk$ are
denoted $Z(M, \emptyset,  \partial M)$ and $Z(M, -\partial M,
\emptyset)$, respectively. If $\partial M=\emptyset$, then $Z(M, \emptyset,  \partial M) \co \kk \to Z(\emptyset)\simeq\kk$ and $Z(M, -\partial M, \emptyset)\co  \kk\simeq Z(\emptyset) \to \kk$ are multiplication by the same element of $\kk$ denoted $Z(M)$.

The category  of $G$-cobordisms   $\mathrm{Cob}^G$ includes as
a subcategory  the category $\mathrm{Homeo}^G$ of $G$-surfaces and
their $G$-homeomorphisms considered up to isotopy (in the class of
$G$-homeomorphisms).
 Indeed,
 a  $G$-homeomorphism of $G$-surfaces  $g \co \Sigma \to \Sigma'$
determines  a morphism  $  \Sigma \to \Sigma' $ in $\mathrm{Cob}^G$
represented by the pair $(C =\Sigma' \times [0,1], h\co    (-\Sigma)
\sqcup\Sigma'  \simeq\partial C)$, where
  $h  (x ) =(g(x), 0) $  for $x\in \Sigma$ and    $h(x') =(x', 1)$ for $x'\in
  \Sigma'$. Isotopic $G$-homeomorphisms   give rise   to the same morphism in $\mathrm{Cob}^G$.
  The category $\mathrm{Homeo}^G$ inherits
  a structure
  of a symmetric monoidal category from that of $\mathrm{Cob}^G$.
    Restricting an HQFT $ Z\co    \mathrm{Cob}^G\to \mathrm{vect}_\kk$ to $\mathrm{Homeo}^G$,
     we obtain a symmetric monoidal functor  $ \mathrm{Homeo}^G\to \mathrm{vect}_\kk$.
     In particular,  $Z$ induces a $\kk$-linear
     representation of the \emph{mapping class group of a $G$-surface $\Sigma$}
      defined as   the group of isotopy classes of $G$-homeomorphisms   $\Sigma\to \Sigma$.

 For $G=\{1\}$, the space $\XX$ is just a point and without any
loss of information we may forget
    the   maps of surfaces and manifolds to $\XX$. We recover thus the familiar notion of a 3-dimensional
      TQFT.

\section{Preliminaries on monoidal   categories}\label{sec-prelimoncat}

In this section we recall several  basic definitions of the
theory of monoidal categories needed for the sequel.

\subsection{Conventions}\label{Conventions} The symbol $\cc$ will denote  a monoidal category with unit object~$\un$.
Notation  $X\in \cc$ will   mean    that $X$ is an object of $\cc$.
To simplify the formulas, we  will always pretend that   $\cc$   is
strict. Consequently, we  omit brackets in the tenor products and
suppress the associativity constraints $(X\otimes Y)\otimes Z\cong
X\otimes (Y\otimes Z)$ and the unitality constraints $X\otimes \un
\cong X\cong \un \otimes X$.    By the tensor product $X_1 \otimes
X_2 \otimes \cdots \otimes X_n$ of $n\geq 2$ objects $X_1,...,
X_n\in \cc$ we mean $(... ((X_1\otimes X_2) \otimes X_3) \otimes
\cdots \otimes X_{n-1}) \otimes X_n$.

\subsection{Pivotal  and spherical  categories}\label{pivotall}
Following \cite{Malt}, by a   \emph{pivotal}   category we mean
a monoidal category $\cc$ endowed with a rule which assigns to each
object $X\in \cc$  a \emph{dual object}~$X^*\in \cc$ and four
morphisms
\begin{align*}
& \lev_X \co X^*\otimes X \to\un,  \qquad \lcoev_X\co \un  \to X \otimes X^*,\\
&   \rev_X \co X\otimes X^* \to\un, \qquad   \rcoev_X\co \un  \to X^* \otimes X,
\end{align*}
satisfying the following   conditions:
\begin{enumerate}
  \renewcommand{\labelenumi}{{\rm (\alph{enumi})}}
  \item For every $X\in\cc$, the triple $(X^*, \lev_X,\lcoev_X)$ is a left
dual of $X$, i.e.,
$$
(\id_X \otimes \lev_X)(\lcoev_X \otimes \id_X)=\id_X \quad \text{and} \quad (\lev_X \otimes \id_{X^*})(\id_{X^*} \otimes \lcoev_X)=\id_{X^*};
$$
\item For every $X\in\cc$, the triple $(X^*, \rev_X,\rcoev_X)$ is a right
dual of~$X$, i.e.,
$$
(\rev_X \otimes \id_X)(\id_X \otimes \rcoev_X)=\id_X \quad \text{and} \quad (\id_{X^*} \otimes \rev_X)(\rcoev_X \otimes \id_{X^*})=\id_{X^*};
$$
\item For every morphism $f\co X \to Y$ in $\cc$,
the \emph{left dual}
$$
f^*= (\lev_Y \otimes  \id_{X^*})(\id_{Y^*}  \otimes f \otimes \id_{X^*})(\id_{Y^*}\otimes \lcoev_X) \colon Y^*\to X^*$$ is equal to the
 \emph{right  dual}
$$
f^*= (\id_{X^*} \otimes \rev_Y)(\id_{X^*} \otimes f \otimes \id_{Y^*})(\rcoev_X \otimes \id_{Y^*}) \colon Y^*\to X^*;$$
\item For all   $X,Y\in
\cc$, the  \emph{left  monoidal constraint}
$$(\lev_X  \otimes \id_{(Y \otimes X)^*})(\id_{X^*}  \otimes \lev_Y \otimes  \id_{(Y \otimes X)^*})(\id_{X^* \otimes Y^*}\otimes \lcoev_{Y \otimes
X})\colon \ldual{X} \otimes \ldual{Y} \to (Y \otimes X)^*$$ is
equal to the  \emph{right  monoidal constraint} $$(\id_{(Y
\otimes X)^*} \otimes \rev_Y)(\id_{(Y \otimes X)^*} \otimes
\rev_X \otimes \id_{X^*})(\rcoev_{Y \otimes X}\otimes \id_{X^*
\otimes Y^*}) \colon \ldual{X} \otimes \ldual{Y} \to (Y \otimes
X)^*;$$
\item  $\lev_\un=\rev_\un \colon \un^* \to \un$ (or, equivalently, $\lcoev_\un=\rcoev_\un\colon \un  \to \un^*$).
\end{enumerate}

 If $\cc$ is pivotal, then for any endomorphism $f$ of an object
$X\in \cc$, one   defines the {\it left} and  {\it right} traces
$$\tr_l(f)=\lev_X(\id_{\ldual{X}} \otimes f) \rcoev_X  \quad {\text {and}}\quad \tr_r(f)=  \rev_X( f \otimes
\id_{\ldual{X}}) \lcoev_X .$$ Both traces take values in $\End_\cc(\un)$ and are symmetric: $\tr_l
(gh)=\tr_l(hg)$ for any morphisms $g\co X\to Y$, $h\co Y\to X$ in $\cc$
and similarly
 for $\tr_r$. Also $\tr_l(f)=\tr_r( {f}^*)=\tr_l(f^{**})$ for any
 endomorphism $f$ of an object. The  {\it left} and  {\it right}
 dimensions of an object $X\in \cc$ are defined by
 $\dim_l(X)=\tr_l(\id_X)$ and $\dim_r(X)=\tr_r(\id_X)$. Clearly,
 $\dim_l(X)=\dim_r(X^*)=\dim_l(X^{**})$ for all $X$.

When $\cc$ is pivotal, we will suppress the duality constraints
$\un^* \cong \un$ and $X^* \otimes Y^*\cong (Y\otimes X)^* $. For
example, we will write $(f \otimes g)^*=g^* \otimes f^*$ for
morphisms $f,g$ in~$\cc$.

A pivotal category $\cc$ is \emph{spherical} if the left and right
traces of endomorphisms in $\cc$ coincide. Set then $\tr
(f)=\tr_l(f)=\tr_r(f)$ for any endomorphism  $f$ of an object of
$\cc$,   and $\dim (X)=\dim_l(X)=\dim_r(X)=\tr(\id_X)$ for any $X\in
\cc$.

 \subsection{Additive categories}
A category ${\mathcal C}$ is  \emph{$\kk$-additive} if the
$\Hom$-sets in ${\mathcal C}$ are modules over the
 ring $\kk$,  the  composition of morphisms of $\cc$ are \kt bilinear, and any finite family of objects of $\cc$ has a   direct sum
in~$\cc$.  Note that the direct sum of an empty family of objects is
a {\it null object},   that is, an object $\mathbf 0\in \cc$ such
that $\End_\cc (\mathbf 0)=0$.

A monoidal category is  \emph{$\kk$-additive} if it is
$\kk$-additive as a category and the monoidal product of morphisms
is \kt bilinear.

\subsection{Semisimple categories}\label{sect-semisimple-cat}
We call an object $U$ of a $\kk$-additive category $\cc$
\emph{simple} if $\End_\cc(U)$ is a free \kt module of rank 1 (and
so has the basis $\{\id_U\}$).     It is clear that   an object
isomorphic to a
 simple object is itself  simple. If $\cc$ is pivotal, then the dual of a   simple object of $\cc$ is  simple.

A \emph{split semisimple category (over $\kk$)} is a  $\kk$-additive
category $\cc$ such that
\begin{enumerate}
  \labela
\item each object of $\cc$ is a finite direct sum of  simple objects;
\item for any non-isomorphic  simple objects $i,j  $ of $\cc$, we have $\Hom_\cc(i,j)=0$.
\end{enumerate}
 Clearly, the Hom spaces in
such a $\cc$ are   free $\kk$-modules of finite rank. For  $X\in\cc$
and a  simple object $i \in \cc$, the  modules $H_X^i=\Hom_\cc(X,i)$
and $H_i^X=\Hom_\cc(i,X)$ have same   rank denoted $N^i_X $ and
called the \emph{multiplicity number}. The bilinear form $H_X^i
\times H_i^X\to \kk$ carrying   $(p\in H_X^i, q\in H_i^X)$ to $pq\in
\End_\cc (i)=\kk$ is non-degenerate. Note that if $i$ admits a
left or right dual $\hat{\imath}$, then $N_{i \otimes
X}^\un=N_{X}^{\hat{\imath}}=N_{X \otimes i}^\un$.

\subsection{Pre-fusion   and fusion   categories}\label{sect-pre-fusion-cat}
  A \emph{pre-fusion category (over $\kk)$} is a split semisimple
$\kk$-additive pivotal category $\cc$ such that the unit object
$\un$ is  simple. In such a   category, the map $\kk \to
\End_\cc(\un), k \mapsto k \, \id_\un$  is a \kt algebra isomorphism
which we use  to identify $\End_\cc(\un)=\kk$. The left and right
dimensions of any simple object of a pre-fusion category are
invertible (see, for example, Lemma 4.1 of \cite{TVi}).

If $I$ is a set of simple objects of pre-fusion category $\cc$ such that every
simple object of $\cc$ is isomorphic to a unique element of~$I$, then for any object $X$ of $\cc$, $N_X^i=0$ for all but a finite number of $i\in I$, and
\begin{equation}\label{eq-dim-mult-numbers}
\dim_l(X)=\sum_{i\in I} \dim_l(i) N_X^i, \qquad \dim_r(X)=\sum_{i\in I} \dim_r(i) N_X^i.
\end{equation}

A \emph{fusion category} is a pre-fusion category such that the
set of isomorphism classes of simple objects is finite. The
\emph{dimension} $\dim(\cc)$ of a   fusion   category $\cc$ is
defined by
$$\dim(\cc)=\sum_{i\in I} \dim_l(i)\dim_r(i) \in \kk,$$
where $I$ is a (finite)  set   of   simple objects of   $\cc$ such
that every simple object of $\cc$ is isomorphic to a unique element
of~$I$. The sum on the right-hand side   does not depend on the
choice of~$I$. Note that
  if   $\kk $ is an algebraically closed field of characteristic zero, then $\dim(\cc)\neq 0$, see \cite{ENO}.

\section{$G$-fusion categories}\label{sec-Gfusioncat}


\subsection{$G$-categories}\label{G-cat-deb} A \emph{$G$-graded category} or shorter a  \emph {$G$-category} is a
$\kk$-additive  monoidal category $\cc$ endowed with a system of
pairwise disjoint full $\kk$-additive subcategories $\{{\mathcal
C}_{g}\}_{{g}\in G}$ such that
\begin{enumerate}
  \labela
  \item
  each object $U \in {\mathcal C}$ splits as a  direct sum
$\oplus_{{g} } \, U_{g}$ where $U_{g}\in {\mathcal C}_{g}$ and
$g$ runs over a finite subset of $G$;

 \item if $U\in {\mathcal C}_{g}$ and  $V\in {\mathcal C}_{h}$, then
$U\otimes V\in {\mathcal C}_{{g}{h}}$;

\item if $U\in {\mathcal C}_{g}$ and $V\in
{\mathcal C}_{h}$ with ${g}\neq {h}$, then $\Hom_{\mathcal C}
(U,V)=0$;

\item the unit object $\un$ of $\cc$ belongs to $ {\mathcal C}_1$.

\end{enumerate}

Under these assumptions, we   write   ${\cc}=\oplus_{g} \,
{\cc}_{g}$. The category ${\mathcal C}_1$ corresponding to the
neutral element $1\in G$ is called the {\it neutral component}
of~${\mathcal C}$. Clearly, ${\mathcal C}_1$ is a $\kk$-additive
monoidal category.

 An object $X$ of a $G$-category ${\cc}=\oplus_{g} \,
{\cc}_{g}$ is   {\it homogeneous} if   $X\in \cc_g$ for some $g\in
G$. Such a $g$
 is then uniquely determined by $X$ and  denoted $|X|$. If
 two homogeneous objects $X, Y\in \cc $ are
 isomorphic, then either they are null objects or $|X|=|Y|$.


 A $G$-category  $\cc$ is \emph{pivotal} (resp.\ {\it spherical}) if it is pivotal (resp.\ spherical) as a
monoidal category. For such $\cc$ and   all $X \in \cc_g$ with $g\in
G$, we can and always do choose $X^*$ to be in  $ \cc_{g^{-1}}$.
 Note that if $\cc$ is   pivotal (resp.\ spherical), then so is $\cc_1$.

 A $G$-category  is \emph{pre-fusion} if it is pre-fusion as a
monoidal category. In particular, a  pre-fusion $G$-category is
supposed to be  pivotal.  In a pre-fusion $G$-category
$\cc=\oplus_{g \in G} \, \cc_g$, every simple object is isomorphic
to a simple object of $\cc_g$ for a unique $g\in G$. Moreover, for
all $g\in G$, each object of $\cc_{g}$ is a finite direct sum of
simple objects of $\cc_{g}$.

A  set $I$  of simple  objects of a pre-fusion $G$-category $\cc$ is
\emph{representative} if $\un \in I$, all elements of $I$ are
homogeneous, and every simple object of $\cc$ is isomorphic to
a unique element of~$I$. Any   such   set $I$ splits  as a disjoint
union $I=\amalg_{g\in G}\, I_g
 $  where $I_{g}$ is the set   of all elements of $I$
 belonging to $\cc_{g}$.

\subsection{$G$-fusion categories}\label{sect-fusion+} A \emph{$G$-fusion category} is a
pre-fusion $G$-category~$\cc$ such that   the set of isomorphism
classes of simple objects of~$\cc_g$   is finite and non-empty for
every $g\in G$. For $G=1$, we obtain the    notion of a fusion
category (see Section~\ref{sect-pre-fusion-cat}).   The neutral
component $\cc_1$ of a $G$-fusion category $\cc$ is a fusion
category. A $G$-fusion category is a fusion category if and only if
$G$ is finite.

%
%
%
%
%
%

 In the next statement we use the multiplicity numbers defined in Section
 \ref{sect-semisimple-cat}.

\begin{lem}\label{bubbleidentity} Let  $I=\amalg_{g\in G} I_g$ be  a  representative set of   simple objects of a $G$-fusion category~$\cc$.
Then
\begin{enumerate}
\labela
\item  For all $g\in G$,
$$\sum_{i \in I_g} \dim_l( i)\dim_r( i) =\dim(\cc_1).$$
\item For all $a,b,g \in G$, $U \in \cc_a$, and $ V \in \cc_b$,
\begin{equation*}
\sum_{k\in I_g,\, \ell\in I_{(agb)^{-1}}} \!\!\!\!\!\!\!\!\!\dim_l(k)\dim_l(\ell)
N_{U
\otimes k \otimes V \otimes \ell}^\un  =   \dim_r(U) \dim_r(V)   \dim(\cc_1) .
\end{equation*}
\end{enumerate}
\end{lem}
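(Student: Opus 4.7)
The plan is to reduce (b) to (a) by a partial summation, and then establish (a) via a bootstrap through a chosen simple object of $\cc_g$.

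For (b), fix $k \in I_g$ and carry out the sum over $\ell$ first. Since $\ell$ is simple with dual $\ell^*$, the adjunction $\Hom_\cc(X \otimes \ell, \un) \cong \Hom_\cc(X, \ell^*)$ gives $N^\un_{U \otimes k \otimes V \otimes \ell} = N^{\ell^*}_{U \otimes k \otimes V}$. By pivotality ($\ell^{**} \cong \ell$), dualization sets up a bijection between isomorphism classes of simples of $\cc_{(agb)^{-1}}$ and of $\cc_{agb}$, so $\{\ell^* \mid \ell \in I_{(agb)^{-1}}\}$ is a representative set of simple objects of $\cc_{agb}$. Combined with $\dim_l(\ell) = \dim_r(\ell^*)$ and the formula \eqref{eq-dim-mult-numbers} applied to $U \otimes k \otimes V \in \cc_{agb}$, the inner sum evaluates to
$$\sum_{\ell \in I_{(agb)^{-1}}} \dim_l(\ell) N^\un_{U \otimes k \otimes V \otimes \ell} = \dim_r(U \otimes k \otimes V) = \dim_r(U) \dim_r(k) \dim_r(V).$$
The outer sum over $k \in I_g$ then factors as $\dim_r(U) \dim_r(V) \sum_{k \in I_g} \dim_l(k) \dim_r(k)$, and (a) finishes the argument.

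For (a), pick any simple object $j \in I_g$; this exists because $\cc$ is a $G$-fusion category. For every $i \in I_g$, the object $i \otimes j^*$ lies in $\cc_1$, so \eqref{eq-dim-mult-numbers} yields
$$\dim_l(i) \dim_r(j) = \dim_l(i \otimes j^*) = \sum_{k \in I_1} \dim_l(k) N^k_{i \otimes j^*}.$$
Multiply by $\dim_r(i)$, sum over $i \in I_g$, and exchange the order of summation. Using $\Hom_\cc(i \otimes j^*, k) \cong \Hom_\cc(i, k \otimes j)$ (via $j^{**} \cong j$), the inner sum rewrites as $\sum_{i \in I_g} \dim_r(i) N^i_{k \otimes j}$; since $k \otimes j \in \cc_g$, formula \eqref{eq-dim-mult-numbers} evaluates it to $\dim_r(k \otimes j) = \dim_r(k) \dim_r(j)$. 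Therefore
$$\dim_r(j) \sum_{i \in I_g} \dim_l(i) \dim_r(i) = \sum_{k \in I_1} \dim_l(k) \dim_r(k) \dim_r(j) = \dim_r(j) \dim(\cc_1),$$
and (a) follows because $\dim_r(j)$ is invertible in a pre-fusion category.

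The principal obstacles are administrative rather than conceptual: one must track the grading components carefully (ensuring each tensor product lands in the correct $\cc_h$ so that \eqref{eq-dim-mult-numbers} applies) and verify that dualization really transports a representative set of $\cc_h$ to one of $\cc_{h^{-1}}$. Both issues rest on the pivotal structure---the natural isomorphism $X^{**} \cong X$ and the identity $\dim_l(X) = \dim_r(X^*)$---together with the general dimension formula \eqref{eq-dim-mult-numbers} and the invertibility of dimensions of simples in a pre-fusion category.
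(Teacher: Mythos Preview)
Your proof of part (b) is correct and follows the same route as the paper's: use $N^{\un}_{U\otimes k\otimes V\otimes \ell}=N^{\ell^*}_{U\otimes k\otimes V}$ together with $\dim_l(\ell)=\dim_r(\ell^*)$, apply the dimension formula \eqref{eq-dim-mult-numbers} to $U\otimes k\otimes V\in\cc_{agb}$, factor out $\dim_r(U)\dim_r(V)$, and invoke (a).

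Your proof of (a) is also correct but takes a genuinely different route from the paper. The paper fixes $k\in I_g$, chooses explicit direct-sum decompositions of $i\otimes k^*$ into simples of $\cc_1$, and then builds out of these a dual family of morphisms exhibiting a decomposition of $j\otimes k$; the identity is extracted from a trace computation on these explicit projectors. You bypass all of that: you fix $j\in I_g$, apply \eqref{eq-dim-mult-numbers} to $i\otimes j^*\in\cc_1$, invoke the Frobenius reciprocity $N^{k}_{i\otimes j^*}=N^{i}_{k\otimes j}$ (which follows directly from the duality adjunction, with no need for $j^{**}\cong j$), swap the two finite sums, and apply \eqref{eq-dim-mult-numbers} once more to $k\otimes j\in\cc_g$. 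Your argument is shorter and more conceptual; the paper's explicit decomposition is essentially a hands-on proof of the same reciprocity, carried out at the level of morphisms rather than Hom-ranks. Both rely at the end on the invertibility of the dimension of a simple object in a pre-fusion category, and both tacitly use multiplicativity of $\dim_l$ and $\dim_r$ under tensor product, valid in any pivotal category.
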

\begin{proof}
Let us prove (a). Pick   $k\in I_g$ and fix it till the end of
the argument. For every $i \in I_g$, the object $i \otimes k^* \in
\cc_1$ splits as a (finite) direct sum of simple objects
 necessarily belonging to  $\cc_1$. Every $j \in I_1$ occurs $N_{i \otimes
k^*}^j$ times in this sum. Then there is a family   of morphisms
$(p_i^{j,\alpha} \co i \otimes k^* \to j, q_i^{j,\alpha} \co j \to i
\otimes k^*)_{ \alpha \in A_{i,j}}$ such that $A_{i,j}$ has $N_{i
\otimes k^*}^j$ elements and $p_i^{j,\alpha}
q_i^{j,\alpha}=\delta_{\alpha,\beta} \,\id_j$ for all $\alpha,\beta
\in A_{i,j}$. This implies
\begin{equation}\label{eq-loc-dim1}
\id_{i \otimes k^*}=\sum_{j \in I_1,\, \alpha \in A_{i,j}} q_i^{j,\alpha}p_i^{j,\alpha}.
\end{equation}
For $i \in I_g$, $j \in I_1$, and $\alpha \in A_{i,j}$, set
\begin{gather*}
\psfrag{p}[Bc][Bc]{\scalebox{1}{$p_i^{j,\alpha}$}}
\psfrag{q}[Bc][Bc]{\scalebox{1}{$q_i^{j,\alpha}$}}
\psfrag{X}[Bl][Bl]{\scalebox{.9}{$k$}}
\psfrag{i}[Br][Br]{\scalebox{.9}{$j$}}
\psfrag{j}[Bl][Bl]{\scalebox{.9}{$i$}}
P_j^{i,\alpha}= \frac{\dim_r(i)}{\dim_r(j)}\, (\id_i \otimes \lev_k)(q_i^{j,\alpha} \otimes \id_k) =\frac{\dim_r(i)}{\dim_r(j)}\;\, \rsdraw{.45}{.9}{loc-dim1} \co j \otimes k \to i, \\
\psfrag{p}[Bc][Bc]{\scalebox{1}{$p_i^{j,\alpha}$}}
\psfrag{q}[Bc][Bc]{\scalebox{1}{$q_i^{j,\alpha}$}}
\psfrag{X}[Bl][Bl]{\scalebox{.9}{$k$}}
\psfrag{i}[Br][Br]{\scalebox{.9}{$j$}}
\psfrag{j}[Bl][Bl]{\scalebox{.9}{$i$}}
Q_j^{i,\alpha}=(p_i^{j,\alpha} \otimes \id_k)(\id_i \otimes
\lcoev_k) = \, \rsdraw{.45}{.9}{loc-dim2} \co i \to j \otimes k.
\end{gather*}
For any $\alpha,\beta \in A_{i,j}$,
$$
P_j^{i,\alpha}Q_j^{i,\beta}=\frac{\tr_r(P_j^{i,\alpha}Q_j^{i,\beta})}{\dim_r(i)} \, \id_i =\frac{\tr_r(p_i^{j,\alpha}q_i^{j,\beta})}{\dim_r(j)}  \, \id_i
=\delta_{\alpha,\beta} \, \id_i.
$$
Note that  the set $A_{i,j}$ has $N_{i \otimes k^*}^j=N_{j \otimes
k}^i$ elements.  Thus, for every $j\in I_1$, the family
$(P_j^{i,\alpha},Q_j^{i,\alpha})_{i \in I_g, \alpha \in A_{i,j}}$
encodes a  splitting of $j \otimes k$ as a direct sum of simple
objects  of $\cc_g$. Hence
\begin{equation}\label{eq-loc-dim2}
\id_{j \otimes k}=\sum_{i \in I_g,\, \alpha \in A_{i,j}} Q_j^{i,\alpha}P_j^{i,\alpha}.
\end{equation}
Using \eqref{eq-loc-dim1}, the definition of  $P_j^{i,\alpha}$,
$Q_j^{i,\alpha}$, and   \eqref{eq-loc-dim2}, we obtain
\begin{gather*}
\sum_{i \in I_g} \dim_r( i) \dim_l( i) \dim_l(k) = \!\! \sum_{\substack{i
\in I_g , \, j \in I_1\\ \alpha \in A_{i,j}}} \!\! \dim_r( i)  \;
\psfrag{p}[Bc][Bc]{\scalebox{1}{$p_i^{j,\alpha}$}}
\psfrag{q}[Bc][Bc]{\scalebox{1}{$q_i^{j,\alpha}$}}
\psfrag{X}[Bl][Bl]{\scalebox{.9}{$k$}}
\psfrag{i}[Bl][Bl]{\scalebox{.9}{$j$}}
\psfrag{j}[Bl][Bl]{\scalebox{.9}{$i$}}
\rsdraw{.45}{.9}{loc-dim3}\\
= \!\!\!\!\!\!\!\!\!\!\!\sum_{i \in I_g, \,  j \in I_1, \, \alpha
\in A_{i,j}} \!\!\!\!\!\!\!\!\! \!\!\dim_r( j) \,
\tr_l(P_j^{i,\alpha}Q_j^{i,\alpha}) = \sum_{j \in I_1} \dim_r( j)
\!\!\!\!\!\! \sum_{i \in I_g, \,  \alpha \in A_{i,j}}
\!\!\!\!\!\!\tr_l(Q_j^{i,\alpha}P_j^{i,\alpha})\\ =\sum_{j \in I_1}
\dim_r( j) \, \tr_l(\id_{j \otimes k})=\sum_{j \in I_1} \dim_r( j)
\dim_l( j) \dim_l(k)=\dim(\cc_1)\dim_l(k).
\end{gather*}
We conclude using that $\dim_l(k)\in \kk$ is invertible.

Let us prove (b). Using \eqref{eq-dim-mult-numbers} and Claim (a) of
the lemma, we obtain
\begin{gather*}
\sum_{k\in I_g,\, \ell\in I_{(agb)^{-1}}}
\!\!\!\!\!\!\!\!\!\dim_l(k)\dim_l(\ell) N_{U \otimes k\otimes V
\otimes  \ell}^\un =\!\!\!\!\!\!\!\!\sum_{k\in I_g,\, \ell\in
I_{(agb)^{-1}}}
\!\!\!\!\!\!\!\!\! \dim_l(k) \dim_r(\ell^*) N_{U \otimes k\otimes V}^{\ell^*}\\
=\sum_{k\in I_g} \dim_l(k) \sum_{m\in I_{agb}}\!\dim_r(m) N_{U
\otimes k\otimes V}^{m}
=\sum_{k\in I_g} \dim_l(k) \dim_r(U \otimes k\otimes V)\\
= \dim_r(U)  \dim_r(V) \sum_{k\in I_g} \dim_l(k) \dim_r(k)=
\dim_r (U) \dim_r(V) \dim(\cc_1). \qedhere
\end{gather*}
 \end{proof}

\subsection{Example}\label{exaexa} Let   $\kk^*$ be  the group
of invertible elements of $\kk$ and let $\mathrm{vect}_G$ be the
category of  $G$-graded free $\kk$-modules of finite rank. It is
well known that every $\theta \in H^3(G;\kk^*)$ determines
associativity constraints on $\mathrm{vect}_G$  extending the usual
tensor product to a monoidal structure. Denote  the resulting
monoidal category by $\mathrm{vect}_G^\theta$. This category is
$G$-graded: $\mathrm{vect}_G^\theta=\oplus_{g \in G}\,
\mathrm{vect}_g^\theta$, where $\mathrm{vect}_g^\theta$ is the full
subcategory of modules of degree $g$. The module  $\kk $  viewed as
an object of $\mathrm{vect}_g^\theta$ is the unique simple object of
$\mathrm{vect}_g^\theta$, at least up to isomorphism. The usual
(co)evaluation morphisms define a pivotal structure on
$\mathrm{vect}_G^\theta$.     It is easy to check that
$\mathrm{vect}_G^\theta$ is s spherical $G$-fusion category.
Clearly,  $\dim(\mathrm{vect}_1^\theta)=1\in \kk$.

\subsection{Remark}\label{remarem}
Each  pre-fusion category $\cc$ has a {\it universal grading}
 defined as follows. Let ${\mathcal I}$ be the set of
isomorphism classes  of simple objects of~$\cc$. Note that gradings
of~$\cc$ by a group   (in the sense of Section~\ref{G-cat-deb})
bijectively correspond to maps $\varrho$ from ${\mathcal I}$ to this
group verifying $\varrho(X)\, \varrho(Y)=\varrho(Z)$ for all
$X,Y,Z\in {\mathcal I}$ such that $Z$ is a direct summand of $X
\otimes Y$. A grading $\partial\co {\mathcal I} \to \Gamma$ of $\cc$
by a group $\Gamma$ is {\it universal} if any grading      of~$\cc$
by any group can be uniquely expressed as the composition of
$\partial$ with a homomorphism from $ \Gamma $ to that group. To
construct a universal grading of $\cc$ set  $\Gamma={\mathcal
I}/\!\!\sim$, where $\sim$ is the weakest equivalence relation on
${\mathcal I}$ such that any two simple objects appearing as direct
summands of the same (finite) monoidal product of simple objects are
equivalent. The product, the unit, and the inverses in $\Gamma$ are
induced by the pivotal structure of $\cc$. The projection $
{\mathcal I} \to \Gamma={\mathcal I}/\!\!\sim$ is a universal
grading of $\cc$. The   group $\Gamma$ is called
  the {\it graduator} of $\cc$.
  For instance, the grading  of
$\mathrm{vect}_G^\theta$ in Example \ref{exaexa} is universal, and
the graduator is $G$.

\section{Multiplicity modules and
graphs}\label{sec-multimodulesandgraphs}

We
 recall here
   symmetrized multiplicity modules and invariants of colored plane graphs introduced in   \cite{TVi}.
  Throughout this section, we fix a    pivotal $\kk$-additive  monoidal category $\cc$   such that
  $\End_\cc(\un)=\kk$.

\subsection{Multiplicity modules}\label{muliplicity
modules}
 A \emph{signed object} of~$\cc$  is a pair
$(U,\varepsilon)$ where $U\in \cc $ and $\varepsilon\in\{+,-\}$. For
a signed object $(U,\varepsilon)$ of $\cc$,  we  set
$U^\varepsilon=U$ if $\varepsilon=+$ and $U^\varepsilon=U^*$ if
$\varepsilon=-$.  A \emph{cyclic $\cc$\ti set} is a triple $(E,c\co
E \to \cc ,\varepsilon\co E \to \{+, - \})$, where $E$ is a totally
cyclically ordered finite set. In other words,  a cyclic $\cc$\ti
set is a totally cyclically ordered finite set whose elements are
labeled
  by signed objects of $\cc$.  For shortness, we
 sometimes write $E$ for   $(E,c
,\varepsilon )$.

Given a cyclic $\cc$\ti set $E=(E,c ,\varepsilon )$ and   $e\in E$,
  we can order $e =e_1<e_2< \cdots <e_n$   the
elements of $E$   starting from $e$ and using the
 given cyclic order  in $E$ (here $n=\# E$ is the number of elements
 of~$E$). Set
$$Z_e= c(e_1)^{\varepsilon(e_1)} \otimes \cdots \otimes
c(e_n)^{\varepsilon(e_n)} \in \cc \quad {\text {and}} \quad
H_e=\Hom_\cc (\un, Z_e   ).$$ By   \cite{TVi}, the  structure of a pivotal category
in $\cc$ determines a projective system of  \kt module isomorphisms
$\{H_e\approx H_{e'}\}_{e,e' \in E}$. For example, if in the
notation above $\varepsilon(e_1)=-$, then the isomorphism $H_{e_1}
\to H_{e_2}$ carries any $f\in H_{e_1}$ to $$(\rev_{c(e_1)} \otimes
\id_{Z_{e_2}}) (\id_{c(e_1)} \otimes f \otimes \id_{{c(e_1)}^*})
\lcoev_{c(e_1)} \in H_{e_2}.$$   If $\varepsilon(e_1)=+$, then the
isomorphism $H_{e_1} \to H_{e_2}$  carries
  $f\in H_{e_1}$ to $$(\lev_{c(e_1)} \otimes \id_{Z_{e_2}}) (\id_{{c(e_1)}^*}
\otimes f \otimes \id_{{c(e_1)}}) \rcoev_{c(e_1)}\in H_{e_2}.$$
 The projective limit of this system of isomorphisms
 $
H(E )=\underleftarrow{\lim} \, H_e $ is a \kt module depending only
on $E$. It is   equipped with   isomorphisms $ \{  H(E ) \to
H_e\}_{e\in E}$, called the \emph{cone isomorphisms}.

An isomorphism   of cyclic $\cc$\ti
  sets $\phi\co E \to  E' $
 is  a bijection  preserving the cyclic order and commuting with the maps to
 $\cc$ and $ \{+, - \}$. Such  a  $\phi$
induces    a  \kt isomorphism
 $H(\phi) \co H(E ) \to H(E' )$ in the obvious way.

The \emph{dual}    of  a cyclic $\cc$\ti set $E= (E,c,\varepsilon)$
is the cyclic $\cc$\ti set $ (E^\opp,c,-\varepsilon)$,
where~$E^\opp$ is~$E$ with opposite cyclic order. The  pivotality of
$\cc$   determines a \kt bilinear  pairing $\omega_E \co H(E^\opp )
\otimes H(E ) \to \kk$ where
  $\otimes=\otimes_{\kk}$ is the tensor product  over $\kk$.  A \emph{duality} between cyclic
$\cc$\ti sets~$ E $ and~$ E' $ is an isomorphism  of cyclic $\cc$\ti
sets $ \phi:
 E' \to E^\opp  $. Such a $ \phi$ induces a \kt bilinear  pairing
$ \omega_{E} \circ (H(\phi)  \otimes \id)\co H(E' ) \otimes H(E )
\to \kk$.

\subsection{Colored graphs}\label{sect-graph}  By a \emph{graph}, we mean
a finite graph  without isolated vertices.     We allow multiple
edges with the same endpoints and loops (edges with both endpoints
in the same vertex). A graph   is \emph{oriented}, if all its edges
are oriented.

Let $\Sigma$ be an oriented surface. By a \emph{graph} in~$\Sigma$,
we mean a  graph embedded in~$\Sigma$.   A vertex $v$ of a
  graph~${A} \subset \Sigma$ determines a cyclically ordered  set $A_v$
consisting of the   half-edges of~$A$ incident to~$v$ with cyclic
order induced by the opposite orientation of~$\Sigma$.  If $A$ is
oriented, then  we  have a map $\varepsilon_v \co A_v \to \{+,-\}$
assigning $+$ to the half-edges oriented towards $v$ and $-$ to the
half-edges oriented away from $v$.

  A {\it
$\cc$-colored graph} in $  \Sigma$ is an oriented  graph   in
$\Sigma$ whose every edge     is labeled with an object of $\cc $
called the \emph{color} of this edge. A vertex $v$ of a
$\cc$-colored
   graph ${A}\subset \Sigma$ determines a cyclic  $\cc$\ti set $A_v=(A_v, c_v, \varepsilon_v )$,
  where $c_v$ assigns to each half-edge the corresponding color.   Set $H_v(A)=H(A_v )$ and
 $H(A)=\otimes_v \, H_v(A) $, where $v$ runs over all vertices of~$A$. To
stress the role of~$\Sigma$, we   sometimes write $H_v(A;\Sigma)$
for~$H_v(A)$ and $H(A;\Sigma)$ for~$H(A)$.

Consider now the case $\Sigma=\R^2$ with counterclockwise
orientation.   Every $\cc$-colored graph $A$ in $\R^2$ determines a
vector $\inv_\cc ({A}) \in H({A})^\star=\Hom_\kk(H({A}),\kk)$, see
\cite{TVi}.  The idea behind the definition of $\inv_\cc ({A})$  is
to deform $A$ in the plane so that in a neighborhood of every vertex
$v$ all half-edges incident to $v$  lie above $v$ with respect to
the second coordinate  in the plane. For each $v$, pick any
$\alpha_v \in H_v({A})$ and replace $v$ by a box   colored with the
image of $ \alpha_v $ under the corresponding cone isomorphism. This
transforms $A$ into a planar diagram formed by colored edges and
colored boxes. Such a diagram determines, through composition and
tensor product of morphisms in $\cc$ (and the use of the left and
right evaluation/co-evaluation morphisms), an element of $
\End_\cc(\un)=\kk$. By linear extension, this procedure defines a
vector $\inv_\cc({A})\in H({A})^\star$. The key property of this
vector is the independence   from the auxiliary choices. Moreover,
both
  $H({A})$ and   $\inv_\cc ({A})$ are preserved
under color-preserving isotopies of $A$ in $\R^2$.

For example, if $A=S^1\subset \R^2$ is the unit circle  with  one
vertex $v=(1,0)$ and one edge oriented clockwise and colored with
$U\in  \cc $, then $A_v$ consists of two elements labeled by
$(U,+)$,
  $(U, -)$ and  $H({A})=H_v({A})\cong \Hom_{\cc} (\un, U^* \otimes U)$. Here
 $\inv_\cc({A})(\alpha)= \lev_U   \alpha \in \End_\cc(\un)=\kk$ for all $\alpha\in   H({A}) $.

If   $\cc$ is   pre-fusion, then the \kt modules
associated in Section \ref{muliplicity modules} with cyclic
$\cc$-sets are free of finite rank and the
 pairings  defined at the end of that section   are non-degenerate
 (cf.\@ \cite{TVi}, Lemmas 2.3 and  4.1.a).
Note also that every $\cc$-colored graph $A $ in an oriented surface
$ \Sigma$ determines a $\cc$-colored graph   $A^\opp$  in~$-\Sigma$
obtained by reversing orientation  in $\Sigma$ and in  all edges
of~$A$ while keeping the colors of the edges. The
  cyclic $\cc$-sets determined by a  vertex $v$ of $A$ and $A^\opp$
  are dual. If $\cc$ is pre-fusion, then  $H_v(A^\opp;-\Sigma)= H_v(A;\Sigma)^\star $ and $ H (A^\opp;-\Sigma)= H
  (A;\Sigma)^\star$.

\subsection{The   spherical  case}\label{spherical}    If $\cc$
is spherical,   then the invariant $\inv_\cc$ of  $\cc$-colored
graphs in $\RR^2$ generalizes to   $\cc$-colored
 graphs
in the 2-sphere
 $S^2=\R^2 \cup \{\infty\}$  (the orientation of~$S^2$ extends the
counterclockwise orientation in $\R^2$).  Indeed, pushing a
$\cc$-colored
 graph $A\subset S^2$
away from $
 \infty $, we obtain   a $\cc$-colored   graph $A_0$ in $
\R^2$.   The sphericity of $\cc$ ensures that  $$\inv_\cc({A})
=\inv_\cc(A_0)\in H(A_0)^\star=H({A})^\star$$ is a well defined
isotopy invariant of $A$. We state a few simple properties of
$\inv_\cc$.

\begin{enumerate}
\labeli
\item (Naturality) If a $\cc$-colored graph $ {A}'  \subset S^2$ is obtained from a
$\cc$-colored graph $A \subset S^2$ through the replacement of
the color $U $ of an edge by an isomorphic object $U'$, then any
isomorphism $\phi \co U\to U'$ induces an isomorphism $\hat \phi
\co H({A})\to H( {A}' )$ in the obvious way and ${\hat
\phi}^\star (\inv_\cc( {A}' ))=\inv_\cc({A})$.

\item If   an edge~$e$ of a $\cc$-colored graph $A \subset S^2$ is
colored by $\un$ and   the endpoints of~$e$ are also  endpoints
 of other edges of~$A$, then $\inv_\cc({A})=\inv_\cc(A\setminus
 \Int(e))$.

\item If a $\cc$-colored graph ${A}' \subset S^2$ is obtained from a
$\cc$-colored graph $A \subset S^2$ through the replacement of
the color $U $ of an edge $e$  by $U^*$ and the reversion of the
orientation of $e$, then there is an isomorphism $ H({A})\to
H({A}')$  such that the dual isomorphism carries $
\inv_\cc({A}')$ to $\inv_\cc({A})$.

\item We have $H(A\amalg {A}')=H({A}) \otimes H ({A}')$ and
$\inv_\cc(A\amalg {A}')=\inv_\cc({A}) \otimes \inv_\cc({A}')$
for any disjoint $\cc$-colored graphs $A, {A}'\subset S^2$.
\end{enumerate}


%

\section{Skeletons and maps}\label{sec-skeletonsstatesums}

 We recall  the notions of stratified
   2-polyhedra and skeletons of 3-manifolds following
   \cite{TVi}, Section 6. Then we explain how to encode maps from
   closed 3-manifolds to $\XX=K(G,1)$ in terms of labelings of   skeletons.

\subsection{Stratified 2-polyhedra}\label{strastra} By an
\emph{arc} in a topological space~$P$, we mean the image of a path
$\alpha\co [0,1]\to P$ which is an embedding except that
  possibly $\alpha(0)=\alpha(1)$  (i.e., arcs may be loops.) The
  points $\alpha(0)$, $\alpha(1)$ are the \emph{endpoints}   and the set $\alpha((0,1)) $ is the \emph{interior} of the
  arc. By a  \emph{2-polyhedron}, we mean a compact topological space that can be triangulated
using only simplices of dimensions $0$, $1$, and $2$.
For a 2-polyhedron $P$, denote by $\Int(P)$ the subspace of~$P$
consisting of  all points having a neighborhood homeomorphic to
$\RR^2$. Clearly, $\Int(P)$ is an (open) 2-manifold.

   Consider a   2-polyhedron~$P$  endowed with a
    finite     set of arcs~$E $    such that
\begin{enumerate}
  \labela
    \item different arcs in~$E$ may meet only at their endpoints;
    \item   $ P \setminus   \cup_{e\in E} \, e \subset \Int (P)$;
    \item  $ P \setminus   \cup_{e\in E} \, e$ is dense in~$P$.
\end{enumerate}
The arcs of~$E$ are called  \emph{edges} of~$P$ and their endpoints
    are called \emph{vertices} of~$P$.   The vertices and
edges  of $P$ form a
    graph $P^{(1)}= \cup_{e\in E} \, e  $.
Cutting $P$ along  $P^{(1)}$, we obtain a compact surface
${\widetilde P}$ with interior $P \setminus P^{(1)}$. The
polyhedron~$P$ can be recovered by gluing~${\widetilde P}$
to~$P^{(1)}$ along a   map $ p\co
\partial {\widetilde P} \to P^{(1)} $. Condition (c) ensures the surjectivity of~$p$. We call the pair $(P, E)$  (or, shorter,~$P$)   a \emph{stratified 2-polyhedron}
if
 the set $p^{-1}({\text {the set of   vertices of}} \, P)$ is finite
and each component of the complement of this set in $\partial
{\widetilde P} $ is mapped homeomorphically onto the interior of an
edge of~$P$.


For a   stratified 2-polyhedron~$P$, the connected components
of~${\widetilde P}$ are called \emph{regions} of~$P$. The set
$\Reg(P) $ of the regions of~$P$ is finite.  For a vertex   $x$
of~$P$,  a \emph{branch}   of~$P$ at~$x$ is
 a germ  at~$x$ of a region of~$P$ adjacent to~$x$.    The set of branches of~$P$ at~$x$ is finite and
 non-empty. The branches
of $P$ at $x$ bijectively correspond to  the elements of the set~$
p^{-1} (x)$, where $ p\co
\partial {\widetilde P} \to P^{(1)} $ is the map above.
 Similarly,    a \emph{branch}   of~$P$ at an edge~$e$ of~$P$ is
 a germ  at~$e$ of a region of~$P$ adjacent to~$e$.
  The set of
branches of~$P$ at~$e$ is  denoted~$P_e$.   This set   is finite and
non-empty. There is a natural bijection between~$P_e$ and  the set
of connected components of $ p^{-1} ({\text {interior of}}\, e) $.
The number of elements of $P_e$ is the \emph{valence} of~$e$. The
edges of~$P$ of valence 1 and their vertices form a graph called the
\emph{boundary} of~$P$ and denoted~$\partial P$. We say that $P$ is
\emph{orientable} (resp.\@ \emph{oriented}) if all regions of~$ {
P}$ are orientable (resp.\@ oriented).

\subsection{Skeletons of    3-manifolds}\label{sect-skeletons} Let $M$ be a closed   oriented 3-dimensional manifold.
 A \emph{skeleton} of   $M$ is
an oriented    stratified 2-polyhedron $P\subset M$ such that
$\partial P=\emptyset$ and  $M\setminus P$ is a disjoint union of
open 3-balls. The components  of $M\setminus P$ are called {\it
$P$-balls}. An example of a skeleton of~$M$ is provided by the
(oriented) 2-skeleton~$t^{(2)}$ of a triangulation~$t$ of~$M$, where
the edges of~$t^{(2)}$ are the edges of~$t$.

We now analyze    regular neighborhoods of edges and vertices of a
skeleton $P \subset M$.   Pick an edge $e$ of $P$ and orient it in
an arbitrary way. The orientations  of~$e$  and~$M$ determine a
positive direction on a small loop in $  M \setminus e$
encircling~$e$ so that the linking number of this loop with $e$ is
$+1$. This direction induces a
   cyclic order on the set $P_e$ of   branches of~$P$ at~$e$.
For a branch $b\in P_e$, set   $\varepsilon_e (b)=+$ if the
orientation of $e$ is compatible with  the orientation
 of $b$   induced by that of the ambient region of $P$ and set $\varepsilon_e (b)=-$ otherwise. This gives a   map $\varepsilon_e \co P_e \to \{+, -\}$.
 When orientation  of $e$ is reversed, the
  cyclic order  on $P_e$ is reversed and     $ \varepsilon_e$ is multiplied by $-$.

Any  vertex   $v$ of   $P  $ has a closed ball neighborhood $B_v
\subset M$ such that $\Gamma_v=P\cap \partial B_v$ is a  non-empty
graph and
  $ P\cap B_v $ is the cone over~$  \Gamma_v $.  The vertices of $\Gamma_v$ are the intersections of $\partial B_v$ with the
half-edges of $P$ incident to~$v$. Similarly, each  edge $\alpha$ of
$\Gamma_v$ is the intersection  of $\partial B_v$ with a branch
$b_\alpha$
  of $P$ at $v$. We  endow   $\alpha$  with orientation induced by that
  of $b_\alpha$ restricted to
$b_\alpha \setminus \Int (B_v)$.
  We
   endow~$\partial B_v\approx S^2$ with orientation induced
by that of~$M$ restricted to $M\setminus \Int(B_v)$.  In this way,
$\Gamma_v$ becomes an oriented graph in the oriented 2-sphere
$\partial B_v$. The pair
 $(\partial B_v, \Gamma_v)$ is   the \emph{link} of~$v$ in~$(M,P)$.
 It
   is well defined  up to orientation preserving  homeomorphism.
   Note that the condition $\partial P=\emptyset$ implies that every
vertex of $\Gamma_v$ is incident to at least two half-edges of
$\Gamma_v$.

\subsection{$G$-labelings}\label{sect-skeletons=} Let $P$ be a skeleton of a closed   oriented 3-dimensional manifold $M$.  A {\it $G$-labeling} of $P$ is a map $  \ell \co \Reg (P) \to G$
such that for every   edge $e$ of $P$ the labels of the adjacent
branches $b_1, \ldots , b_n$ of $P$, enumerated in the cyclic order
determined by an  orientation of $e$,  satisfy the following product
condition:
\begin{equation}\label{labelidentity} \prod_{i=1}^n \ell(b_i)^{\varepsilon_e (b_i)}=1,\end{equation}
where the map $\varepsilon_e:P_e\to \{+,-\}$ is determined by the
same orientation of $e$. Note that if \eqref{labelidentity} holds
for one orientation of $e$, then it holds also for the opposite
orientation.

 The $G$-labelings   of $P$ determine    homotopy classes of
maps  $ M  \to \XX=K(G,1)$ as follows.  Pick  a point, called the
{\it center} in every $P$-ball (i.e., in every component of
$M\setminus P$). Each region $r$ of $\Reg(P)$ is adjacent to two
(possibly coinciding) $P$-balls. Pick an arc $\alpha_r$ in $ M$
whose endpoints are the centers of these balls and whose interior
meets $P$ transversely in a single point lying in $r$. We choose the
arcs $\{\alpha_r\}_r$ so that they meet only in the endpoints and
orient them so that the intersection number $\alpha_r \cdot r=r\cdot
\alpha_r$ is $+1$ for all $r$.

 \begin{lem}\label{lem-arcs}   For each $G$-labeling $\ell$ of $P$
 there is a map $f_\ell\co M\to \XX$ carrying the centers of the $P$-balls to  the base point $x\in \XX$ and carrying  $\alpha_r$ to a
 loop in $\XX$ representing $\ell(r)\in G=\pi_1(\XX,x)$ for all $r\in
 \Reg (P)$. The homotopy class of $f_\ell$ depends only on $P$ and $\ell$.
 Any map $M\to \XX$ is homotopic to $f_\ell$ for some $G$-coloring
 $\ell$ of $P$.
\end{lem}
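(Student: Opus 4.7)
The plan is to interpret the centers of the $P$-balls and the arcs $\{\alpha_r\}_{r\in \Reg(P)}$ as the $0$- and $1$-skeleton of a CW decomposition of $M$ dual to $P$, and then invoke standard obstruction theory using the fact that $\XX=K(G,1)$ is aspherical, i.e.\ $\pi_n(\XX,x)=0$ for all $n\geq 2$.

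First I would construct a dual CW structure on $M$. Taking regular neighborhoods of the centers and of the arcs $\alpha_r$, and taking the closures of the $P$-balls intersected with the complement of a regular neighborhood of $P^{(1)}$, yields a CW decomposition $M=M^{(0)}\cup M^{(1)}\cup M^{(2)}\cup M^{(3)}$ in which the $0$-cells are the centers, the $1$-cells are the arcs $\alpha_r$ (one per region $r$), the $2$-cells are dual to the edges of $P$, and the $3$-cells are dual to the vertices of $P$. The key combinatorial point is that the attaching map of the $2$-cell dual to an edge $e$ of $P$ traverses, in the cyclic order induced by an orientation of $e$, the arcs $\alpha_{r(b_1)},\ldots,\alpha_{r(b_n)}$ associated with the branches $b_1,\ldots,b_n\in P_e$, each traversed with sign $\varepsilon_e(b_i)$.

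Next I would prove existence of $f_\ell$ by skeleton-wise extension. Set $f_\ell$ to send every $0$-cell to $x$. On each $1$-cell $\alpha_r$, choose a loop in $\XX$ based at $x$ representing $\ell(r)\in G=\pi_1(\XX,x)$. The product condition \eqref{labelidentity} says exactly that for every edge $e$ of $P$ the composed loop along the boundary of the corresponding $2$-cell represents the trivial element of $\pi_1(\XX,x)$, so it is null\nobreakdash-\hspace{0pt}homotopic and $f_\ell$ extends over the $2$-skeleton. Since $\pi_2(\XX)=0$, any map from $\partial D^3=S^2$ to $\XX$ is null\nobreakdash-\hspace{0pt}homotopic, so $f_\ell$ extends over the $3$-cells. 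This gives $f_\ell\co M\to \XX$ with the required properties.

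I would then prove well\nobreakdash-\hspace{0pt}definedness of the homotopy class by the same obstruction argument applied to a homotopy. Two choices of $f_\ell$ agree on the $0$-cells (both send them to $x$) and, after a homotopy on the $1$-skeleton, can be arranged to agree on the $1$-cells since their values there represent the same element of $\pi_1(\XX,x)$. The obstructions to homotoping them on the $2$- and $3$-cells rel lower skeleton lie in cellular cochain groups with coefficients in $\pi_2(\XX)=0$ and $\pi_3(\XX)=0$, so both obstructions vanish.

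Finally, for surjectivity on homotopy classes, given an arbitrary map $f\co M\to \XX$, I would first homotope $f$ to send every center of a $P$-ball to $x$; this is possible because $\XX$ is path-connected. Then each loop $f\circ\alpha_r$ based at $x$ represents an element $\ell(r)\in G$, and the fact that $f$ extends over the $2$-cell dual to any edge $e$ of $P$ forces the product condition \eqref{labelidentity} to hold, so $\ell$ is a $G$-labeling of $P$. By the uniqueness argument above, $f$ is homotopic to $f_\ell$.

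The main obstacle I anticipate is the bookkeeping for the dual CW structure, in particular verifying that the attaching map of the $2$-cell dual to an edge $e$ precisely produces the word $\prod_i \ell(b_i)^{\varepsilon_e(b_i)}$ with the signs and cyclic order dictated by the conventions in Section~\ref{sect-skeletons}; once this is pinned down, the rest is a direct application of obstruction theory for maps into an aspherical space.
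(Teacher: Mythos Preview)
Your argument coincides with the paper's proof in the special case where every region of $P$ is a disk, and in that case the dual CW decomposition you describe exists and the obstruction-theoretic extension goes through exactly as you outline. The genuine gap is that for a general skeleton the regions of $P$ need \emph{not} be disks, and then the complex you describe is not a CW decomposition of $M$: one arc $\alpha_r$ per region does not supply enough $1$-cells.

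A concrete counterexample is the skeleton $P=(\{s\}\times S^2)\cup(S^1\times L)$ of $M=S^1\times S^2$ discussed right after Theorem~\ref{thm-state-3man}. It has one vertex, one (loop) edge, three regions (two disks and one annulus $(S^1\setminus\{s\})\times L$), and two $P$-balls. Your dual complex would then have $2$ zero-cells, $3$ one-cells, $1$ two-cell and $1$ three-cell, hence Euler characteristic $2-3+1-1=-1\neq 0=\chi(S^1\times S^2)$. Geometrically, the core circle of the annulus region is not bounded by anything in your proposed $2$-skeleton, so the complement of that $2$-skeleton is not a union of open $3$-balls.

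The paper repairs this by first doing the disk case (exactly your argument) and then, for a general region $r$, collapsing $r$ onto a wedge of circles $W_r\subset r$ and thickening it to a set $S_r\supset\alpha_r$ obtained by coning $W_r$ to the centers of the adjacent $P$-balls; the pair $(M,\cup_r S_r)$ then admits a relative CW decomposition with only $2$- and $3$-cells, and the retraction $S_r\to\alpha_r$ lets one define $f_\ell$ on $\cup_r S_r$ using only the single label $\ell(r)$ before extending by obstruction theory. For the final surjectivity claim one also needs the observation that each inclusion $W_r\hookrightarrow M$ is null-homotopic (push $r$ into an adjacent open $P$-ball), so that an arbitrary $f\co M\to\XX$ carries no extra $\pi_1$-information along the cores of non-disk regions beyond what $\ell$ records. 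Your proposal does not address either of these points; the ``bookkeeping'' obstacle you anticipate is real, but it is topological rather than combinatorial.
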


\begin{proof} Consider first the case where all
regions of $P$ are disks. Then $P$ determines a dual
CW-decomposition $P^*$ of $M$, whose 0-cells are the centers of the
$P$-balls, the 1-cells are the arcs $\{\alpha_r\}_r$, the 2-cells
are  meridional disks of the edges of $P^{(1)}$, and the 3-cells are
ball neighborhoods   in $M$ of the vertices of $P$. Consider a map
from the 1-skeleton of $P^*$ to $\XX$ carrying all 0-cells   to $x$
and carrying each $\alpha_r$ to a
 loop in $\XX$ representing $\ell(r) $. The product
condition \eqref{labelidentity} ensures that
 this map extends to the 2-skeleton of $P^*$. The equality $\pi_2(\XX)=0$  ensures  that there is a further extension to  a map $f_\ell:  M\to \XX$.
 The independence of $f_\ell$ of  the choice of   $\{\alpha_r\}_r$ follows from
 the fact that any two such systems  of arcs  are homotopic in $M$ relative to the centers of the $P$-balls. The independence of the choice
 of the centers and the last claim  of the lemma are straightforward.

In the general case, each   region $r$ of $P$ is  a disk with holes
and may be collapsed onto a wedge of circles $W_r \subset r
\setminus \partial r$ based at the point $\alpha_r\cap r$. Let $S_r$
be   obtained from $W_r\times [-1,1]$ by contracting the sets
$W_r\times \{-1\}$ and $W_r\times \{1\}$ into points called the
vertices of $S_r$. (If the two $P$-balls adjacent to $r$ are equal,
then we   identify the two vertices of $S_r$.)  We embed $S_r$ into
$M$ as a union of two cones with base $W_r$ and with the cone points
in the centers of the $P$-balls adjacent to $r$. We can assume that
$\alpha_r \subset S_r$ and that $S_r$ does not meet $S_{r'}$ for
$r\neq r'$ except possibly  in the vertices. Then the pair $(M,
\cup_r \, S_r)$ has a relative CW-decomposition whose only cells are
   meridional disks of the edges of $P$ and ball neighborhoods  in $M$ of the vertices of $P$. The projection $W_r\times [-1,1]\to [-1,1]$ induces a
    retraction $S_r\to \alpha_r$. Composing with loops $\alpha_r\to \XX$ representing $\ell(r)$ we obtain   a map
   $\cup_r \, S_r\to \XX$ carrying the centers of the $P$-balls to   $x$ and carrying  each $\alpha_r$ to a
 loop in $\XX$ representing $\ell(r) $.   The rest of the proof goes as in the previous paragraph.
 To prove the last claim of the lemma it is useful to note that  the inclusion  $W_r \to M$   is null-homotopic for all $r\in \Reg(P)$. \end{proof}

The maps from the set $\pi_0(M\setminus P)$ of $P$-balls to $G$
form a group   with respect to pointwise multiplication. This group
is called the \emph{gauge group of $P$} and denoted
by~$\mathcal{G}_P$. The group $\mathcal{G}_P$ acts (on the left) on
the set of $G$-labelings of~$P$: for  $\lambda \in \mathcal{G}_P$, a
$G$-labeling $ \ell \co \Reg (P) \to G$, and a region $r$ of $P$,
$$
(\lambda  \ell)(r)=\lambda(r_-)\, \ell(r)\, \lambda(r_+)^{-1}
$$
  where $r_\pm$ are the $P$-balls (possibly coinciding)
adjacent to~$r$ and  indexed so that $\alpha_r(0) \in r_-$ and
$\alpha_r(1) \in r_+$.   It is easy to see that $\lambda  \ell$ is a
$G$-labeling of $P$   determining the same homotopy class of maps
$M\to \XX$ as~$\ell$. Moreover, two $G$-labelings   of $P$ determine
the same homotopy class of maps $M \to \XX$ if and only if these
$G$-labelings  belong to the same $\mathcal{G}_P$-orbit.

 \subsection{$G$-skeletons}\label{sect-skeletons++} A {\it $G$-skeleton} of a $G$-manifold $M$ is a pair   (a skeleton $P$ of $M$,
  a $G$-labeling of $P$
representing the given homotopy class of maps $  M\to \XX$). For
brevity, such a pair will be often denoted by the same letter $P$ as
the underlying skeleton.  Lemma~\ref{lem-arcs} shows that any
skeleton of $M$   extends to a $G$-skeleton.

\section{State-sum invariants of  closed $G$-manifolds}\label{state-suminvariabnts ofclosed}

\subsection{The state-sum invariant}\label{sec-computat}  Fix a spherical $G$-fusion category $\cc$ such that
$\dim(\cc_1)\in \kk$ is invertible in the ground ring $\kk$.
    For any closed $G$-manifold $M$, we define a topological invariant $|M |_\cc \in \kk$ of   $M$.
This invariant is obtained as a state sum on a $G$-skeleton
$P=(P,\ell)$ of~$M$ as follows.  Pick a representative set
$I=\amalg_{g\in G}\, I_g$ of simple objects of $\cc$.
 Let
$\coul{P}$ be the set of maps  $c \co \Reg(P) \to I$ such that $c(r)\in
I_{\ell (r)}$ for all regions $r$ of $P$.  For $c\in \coul{P}$ and an
oriented edge $e$ of $P $, we have  a \kt module $H_c(e)=H(P_e)$,
where~$P_e$ is the set of branches of~$P$ at~$e$ turned into a
cyclic $\cc$\ti set as follows: the cyclic order and the map to
$\{\pm \}$ are as  in Section~\ref{sect-skeletons} and the
$\cc$-color of a branch $b\in P_e$ is the value of $c$ on the region
of~$P$ containing  $b$.  If $e^\opp$ is the same edge  with opposite
orientation,
 then   $P_{e^\opp}=(P_e)^\opp$. This induces a   duality between
 the modules
 $H_c(e)$, $H_c(e^\opp)$ and a
 contraction  homomorphism  $ \ast_e \co H_c(e)^\star \otimes H_c(e^\opp)^\star \to\kk$.

By Section~\ref{sect-skeletons},  the link of a   vertex $v \in P$
is an oriented    graph $\Gamma_v \subset \partial B_v\approx S^2$.
Given   $c \in \coul{P}$, we transform  $\Gamma_v$ into a $\cc$-colored
graph by coloring each edge of $\Gamma_v$ with the value of $c$ on
the region of $P$ containing this edge. Section~\ref{spherical}
yields a tensor $\inv_\cc (\Gamma_v) \in H (\Gamma_v)^*$. By
definition,
 $H (\Gamma_v)= \otimes_e\, H_c(e)$, where
 $e$ runs over all edges of $P$ incident to $v$ and oriented away from $v$ (an edge   with both endpoints in $v$ appears in this tensor product twice
 with opposite orientations). The tensor product $\otimes_v \,\inv_\cc (\Gamma_v)$ over all
vertices~$v$ of~$P$ is a vector in
 $\otimes_e \, H_c(e )^\star$, where   $e$
 runs over all oriented edges of $P$. Set
$\ast_P=\otimes_e \, \ast_e\co \otimes_e   H_c(e )^\star \to \kk$.

\begin{thm}\label{thm-state-3man}
Set
\begin{equation}\label{eq-simplstatesum+}|M|_\cc= (\dim(\cc_1))^{-\vert P\vert} \sum_{c \in \coul{P}} \,\,  \left ( \prod_{r \in \Reg(P)} (\dim c(r))^{\chi(r)} \right ) \,
  {\ast}_P ( \otimes_v \,\inv_\cc (\Gamma_v)) \in \kk,\end{equation}
where   $\vert P\vert$ is the number of   $ P$-balls  and $\chi $ is
the Euler characteristic. Then $|M|_\cc$ is a topological invariant
of the $G$-manifold $M$ independent of   the choice  of $I$ and $P$.
\end{thm}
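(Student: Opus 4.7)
The plan is to follow the skeletal state-sum strategy of \cite{TVi}, adapted to the $G$-graded setting, and break the proof into three independent steps: well-definedness and independence of $I$, gauge-invariance, and invariance under local moves on the underlying skeleton.

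First, I would check that \eqref{eq-simplstatesum+} makes sense and is independent of the choice of representative set $I$. The product condition \eqref{labelidentity} ensures that, for every oriented edge $e$ of $P$, the ordered tensor product of the colors $c(b)^{\varepsilon_e(b)}$ along $b\in P_e$ lies in $\cc_1$, so $H_c(e)=H(P_e)$ is a free \kt module of finite rank (by pre-fusionness) and carries a non-degenerate duality with $H_c(e^\opp)$; hence $\ast_e$ and $\ast_P$ are well defined. Each $\inv_\cc(\Gamma_v)$ lives in $H(\Gamma_v)^\star=\otimes_e H_c(e)^\star$ where $e$ ranges over half-edges at $v$ oriented outward, which matches up perfectly under $\ast_P$. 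Independence of $I$ then follows from naturality (property (i) of Section~\ref{spherical}) of $\inv_\cc$ applied to isomorphisms from an old simple representative to a new one; the induced isomorphisms at each region telescope through the edge contractions and cancel in pairs.

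Second, I would prove that the state sum depends on the $G$-labeling $\ell$ only through its $\mathcal{G}_P$-orbit, so that by the last paragraph of Section~\ref{sect-skeletons=} it depends only on the homotopy class of map $M\to \XX$. A gauge transformation $\lambda\in\mathcal{G}_P$ replaces the coloring constraint $c(r)\in I_{\ell(r)}$ by $c(r)\in I_{(\lambda\ell)(r)}$. Fixing, for each $P$-ball $B$, an isomorphism $c(r)\mapsto \lambda(B)\cdot c(r)$ on the $I_g$'s induces a bijection between $\coul{P}$ for $\ell$ and for $\lambda\ell$ that preserves $\dim c(r)$ and, by naturality of $\inv_\cc$ at each vertex, preserves the contribution of every link graph. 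Hence the total state sum is unchanged.

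Third, and most substantially, I would establish invariance under a generating set of local moves on (oriented) $G$-skeletons of $M$. By Matveev's theorem (as reformulated for non-generic skeletons in \cite{TVi}), any two skeletons of $M$ are connected by a sequence of the lune move, the bubble move, the Matveev--Piergallini $(2\leftrightarrow 3)$ move, plus elementary changes in edge orientation; each such move lifts to a move on $G$-skeletons with the $G$-labels on any new region determined, up to one free parameter $g\in G$, by \eqref{labelidentity}. For each move one verifies that the state sum is invariant using: sphericity and isotopy invariance of $\inv_\cc$ on $S^2$ for trivial link deformations, property (iii) of $\inv_\cc$ for edge-reorientation, Lemma~\ref{bubbleidentity}(a) to absorb the new factor $\sum_{i\in I_g}\dim_l(i)\dim_r(i)=\dim(\cc_1)$ produced by a bubble against the $(\dim(\cc_1))^{-|P|}$ prefactor, and a bilinear-pairing identity in $\Hom_\cc$ for the lune.

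The main obstacle is the $(2\leftrightarrow 3)$ move: there one must match, on each side of the move, a contraction of colored planar graphs against a sum over the colors of the new region. After pushing the graphs onto a 2-sphere and contracting along common edges using sphericity and the semisimple expansion $\id_{j\otimes k}=\sum Q_j^{i,\alpha}P_j^{i,\alpha}$ from the proof of Lemma~\ref{bubbleidentity}, the required identity reduces precisely to Lemma~\ref{bubbleidentity}(b) with $U,V$ given by the neighbouring colors and $g$ the grade of the region that is being summed over. This is the essential place where the $G$-fusion hypothesis, and in particular the invertibility of $\dim(\cc_1)$, is used; everything else either already appears in the $G=1$ treatment of \cite{TVi} or is a straightforward $G$-graded bookkeeping on top of it.
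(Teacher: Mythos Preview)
Your first step is fine and matches the paper. The genuine gap is in your second step. The category $\cc$ here is only $G$-graded, not $G$-crossed, so there is no action of $G$ on objects and the expression ``$\lambda(B)\cdot c(r)$'' has no meaning. There is in general no bijection $I_{\ell(r)}\to I_{(\lambda\ell)(r)}=I_{\lambda(r_-)\ell(r)\lambda(r_+)^{-1}}$, and since non-null objects in distinct $\cc_g$'s are never isomorphic, naturality of $\inv_\cc$ cannot be invoked across grades. Gauge invariance is therefore not a coloring-by-coloring bijection argument. The paper avoids a direct gauge argument entirely: it proves instead (Lemma~\ref{lem-momovesl}) that \emph{any} two $G$-skeletons of $M$, in particular two gauge-equivalent $G$-labelings of the same underlying skeleton, are connected by the primary moves $T_1^{\pm1},\dots,T_4^{\pm1}$. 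The real work sits in that lemma: one realizes an elementary gauge transformation $\lambda_{g,b}$ at a single $P$-ball $b$ by primary moves, via a ``doubling'' trick on a region together with sliding a bubble disk around $\partial\overline b$ using $T^{m,n}$-type moves. With that lemma in hand, the theorem reduces purely to move-invariance and your step two is unnecessary.

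On your third step, the organization also differs from the paper. The paper works with the non-generic moves $T_1$--$T_4$ of \cite{TVi} rather than the classical lune and Matveev--Piergallini moves; invariance under $T_1,T_2,T_3$ is taken verbatim from \cite{TVi}, and the only place the $G$-grading enters is the bubble move $T_4$. The identity used there is the case $V=\un$ of Lemma~\ref{bubbleidentity}(b), a sum over the two new disk-colors $k,\ell$. The $(2\leftrightarrow3)$ move ($T^{1,2}$) is a composite of primary moves and its invariance comes from a plain semisimple splitting of a $\Hom$-space, not from Lemma~\ref{bubbleidentity}(b); so your identification of that lemma as ``the main obstacle'' for $(2\leftrightarrow3)$ is misplaced.
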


This theorem generalizes Theorems 5.1 and 6.1 of \cite{TVi}
which produce a 3-manifold invariant $|\cdot |_{\mathcal D} $ from
any
  spherical  fusion category $\mathcal D$ whose dimension is
invertible in the ground ring (the case $G=\{1\}$).

We illustrate Theorem~\ref{thm-state-3man} with two examples. First,
if the given homotopy class of maps $M\to \XX$ includes the constant
map, then $|M|_\cc=|M|_{\cc_1}$. This follows from the definitions
because the constant map is represented by the constant labeling $ 1
\in G$. In particular, $|S^3|_\cc=|S^3|_{\cc_1}=(\dim(\cc_1))^{-1}$.
Secondly, $|(S^1\times S^2, f )|_\cc=1$ for any map $f:S^1 \times
S^2 \to \XX$. Indeed, pick a point $s\in S^1$ and a circle $L\subset
S^2$. The set $P=(\{s\} \times S^2 ) \cup (S^1\times L)$ is a
skeleton of $S^1\times S^2$ with one edge $\{s\}\times L$ and three
regions. We orient  the two disk regions of~$P$ lying in $S^2$
counterclockwise and orient the annulus region of~$P$ in an
arbitrary way. We label the annulus region with $1\in G$ and label
both disk regions with an element $g $ of $ G$ represented
 (up to conjugation and inversion) by the   restriction
of $f$ to $S^1\times \{pt\}$.   This $G$-labeling of $P$ represents
$f$. Formula~\eqref{eq-simplstatesum+}
 gives $$|(S^1\times S^2, f)|_\cc=(\dim(\cc_1))^{-2}\!\!\!\!\!\!\!\sum_{j \in I_1, \, k, l\in I_g } \!\!\!\!\!\!\!\dim(k)\dim(l)\,  \rank_\kk \,
 \Hom_\cc(\un,j \otimes k\otimes j^* \otimes l^*).$$
The right-hand side is equal to $1$ as easily follows from
Lemma~\ref{bubbleidentity}.

  Theorem~\ref{thm-state-3man} will be proved at
  the end of the section. We   first   recall the   moves on skeletons introduced in \cite{TVi} and lift them to   $G$-skeletons.

\subsection{Moves on skeletons}   Let $M$ be a closed oriented 3-dimensional manifold. We consider four   moves    $T_1-T_4$ on a
 skeleton $P \subset M$ transforming it into   a new skeleton $P'$ of~$M$, see Figure~\ref{fig-moves}.
  The \lq\lq phantom edge  move"~$T_1$ keeps~$P$ as a polyhedron and  adds one new edge connecting   distinct vertices of
  $P$; this edge is an arc in $P$ meeting
    $P^{(1)}$ solely at the endpoints and has   valence~2 in~$P'$.
The \lq\lq contraction move" $T_2$ collapses into a point an
 edge~$e$ of~$P$.
 This
  move is allowed only when the endpoints of $e$ are distinct and at least one   of them is the endpoint of some other
  edge.
The \lq\lq percolation  move" $T_3$  pushes a branch~$b$ of~$P$
through a vertex~$v$ of~$P$. The branch~$b$ is pushed across a small
disk~$D$ lying in
   another branch of~$P$ at~$v$  so that $D\cap P^{(1)}= \partial D\cap
   P^{(1)}=\{v\}$ and both these branches are adjacent to the same component of $M\setminus
   P$. The disk $D$ becomes a region of the resulting skeleton $P'$;
   all other regions of $P'$ correspond bijectively to the regions
   of $P$ in the obvious way.
The \lq\lq bubble  move" $T_4$  adds to~$P$ an embedded disk
$D_+\subset M$ such that   $D_+\cap P=\partial D_+ \subset
P\setminus P^{(1)}$,  the circle $\partial D_+$ bounds a disk~$D_-$
in $ P\setminus P^{(1)}$, and the 2-sphere $D_+\cup D_-$ bounds a
ball in $M$ meeting~$P$ precisely
   at~$D_-$.
   A  point of the circle $\partial D_+ $ is chosen as a vertex and the circle itself is viewed
   as an edge of the resulting skeleton  $P'$. The disks $D_+$ and $D_-$ become  regions of   $P'$;
   all other regions of $P'$ correspond bijectively to the regions
   of $P$.

   \begin{figure}[h, t]
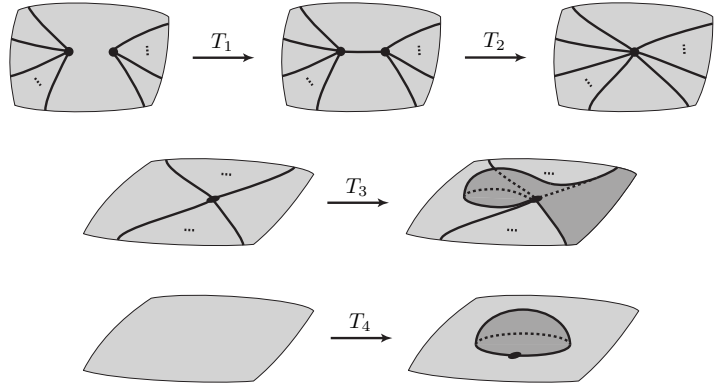

\begin{center}
\psfrag{T}[Bc][Bc]{\scalebox{.9}{$T_1$}} \psfrag{L}[Bc][Bc]{\scalebox{.9}{$T_2$}}  \rsdraw{.45}{.9}{skel-move2} \\
[1.6em]  \psfrag{T}[Bc][Bc]{\scalebox{.9}{$T_3$}}  \rsdraw{.45}{.9}{skel-move3} \\
[1.6em]  \psfrag{T}[Bc][Bc]{\scalebox{.9}{$T_4$}}  \rsdraw{.45}{.9}{skel-move4}
\end{center}
\caption{Local moves on skeletons}
\label{fig-moves}
\end{figure}

In the  pictures of $T_1-T_4$ and in similar pictures below we
distinguish the \lq\lq small" regions   entirely contained in the
3-ball where the move proceeds and the \lq\lq big" regions   not
entirely contained in the 3-ball where the move proceeds. The moves
$T_1$, $T_2$ have no small regions,
 $T_3$ creates one small region $D$, and $T_4$ creates  two
small regions  $D_+$ and $D_-$. In the definition of   $T_1-T_4$  we
use the following orientation convention:  orientations of the big
regions   are preserved under the move  while   orientations of the
small regions may be arbitrary.

  The moves $T_1-T_4$ have  obvious inverses. The move   $T_1^{-1}$
 deletes a 2-valent edge~$e$ with distinct endpoints; this move is
   allowed only when both endpoints of~$e$ are endpoints of some other edges and the orientations of the two regions adjacent to~$e$ are compatible.

The  moves $T_1-T_4$   lift to $G$-labelings of skeletons of $M$ by
requiring that the labels of all big regions are preserved under the
moves. We impose no conditions on the labels of the small regions
except the product condition \eqref{labelidentity} which must hold
both before and after the move.  The labelings transform in a unique
way under $T_1^{\pm 1}, T_2^{\pm 1}, T_3^{\pm 1}, T_4^{-1} $. Under
$T_4$, the label of $D_+$ may be an arbitrary element of~$G$ and the
label of $D_-$ is then determined uniquely.  If $M$ is a
$G$-manifold, then each of these moves transforms a $G$-skeleton of
$M$ into a $G$-skeleton of $M$. These
  transformations of $G$-skeletons   are denoted by the same symbols $T_1^{\pm
1}-T_4^{\pm 1}$ and called {\it primary moves}. Label-preserving
ambient isotopies of $G$-skeletons in $M$ are also viewed as primary
moves.

  \begin{lem}\label{lem-momovesl}   Any
  two $G$-skeletons  of  a closed $G$-manifold $M$ can be related by a finite sequence of primary
moves.
\end{lem}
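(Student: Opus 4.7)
The plan is to reduce the lemma to two facts: the unlabeled analogue of the statement, and the gauge description of $G$-labelings representing a fixed homotopy class given at the end of Section~\ref{sect-skeletons=}.

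The unlabeled analogue, asserting that any two skeletons of a closed oriented 3-manifold are related by a finite sequence of the moves $T_1^{\pm 1}, \ldots, T_4^{\pm 1}$ and isotopies, is established in \cite{TVi}. Given $G$-skeletons $(P, \ell)$ and $(P', \ell')$ of $M$, I would first apply this unlabeled result to obtain a sequence of ungraded moves $P = P_0 \to P_1 \to \cdots \to P_n = P'$, and then inductively lift it to a sequence of primary moves starting at $(P, \ell)$. Along the lift, the labeling transforms uniquely under every move of type $T_i^{\pm 1}$ for $i=1,2,3$ and under $T_4^{-1}$; for each $T_4$ move one chooses an arbitrary element of $G$ to label the new region $D_+$. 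This produces a chain of primary moves $(P, \ell) \to \cdots \to (P', \ell'')$ for some $G$-labeling $\ell''$ of $P'$. Since every primary move preserves the homotopy class of the associated map $M \to \XX$ (verified by applying Lemma~\ref{lem-arcs} before and after each move), $\ell''$ and $\ell'$ both represent the given homotopy class and hence lie in the same $\mathcal{G}_{P'}$-orbit.

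It then remains to connect gauge equivalent labelings $\ell''$ and $\ell'$ of the fixed skeleton $P'$ by primary moves. Since $\mathcal{G}_{P'}$ is generated by elementary gauge transformations, each supported on a single $P'$-ball $B$ with value some $g \in G$, it suffices to realize each such elementary transformation. The construction I propose is: apply $T_4$ inside $B$ with the free label on $D_+$ chosen to be $g$, then use a sequence of percolation moves $T_3^{\pm 1}$ (and, if needed, auxiliary $T_1^{\pm 1}$ or $T_2^{\pm 1}$ to reach general position) to slide the resulting bubble across $B$ so that its $D_+$ face sweeps past every region of $\partial B$, and finally apply $T_4^{-1}$ on the far side of $B$ to remove the bubble. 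The cumulative effect on the labels of the regions adjacent to $B$ is conjugation by $g$, which is precisely the prescribed elementary gauge transformation.

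The main obstacle will be the explicit realization of the elementary gauge transformation in the final step: one must verify that the bubble can be transported through $B$ using only the allowed local moves and that this transport conjugates the labels on $\partial B$ by $g$. This is a technical but essentially routine exercise in PL topology, combined with the uniqueness of the label extensions under $T_1$--$T_3$ that allows one to track the label changes step by step.
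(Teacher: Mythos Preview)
Your strategy is the paper's: reduce to the unlabeled case via \cite{TVi}, lift the sequence of moves to $G$-labelings, and then realize each elementary gauge transformation $\lambda_{g,b}$ by a bubble--sweep--debubble procedure inside the $P$-ball $b$. However, the final step is not the routine exercise you suggest, and two specific obstacles that the paper treats explicitly are missing from your sketch.

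First, the closure $\overline b$ need not be an embedded $3$-ball in $M$: a region of $P$ may be adjacent to $b$ on both sides, so ``the regions of $\partial B$'' is not well-defined and sweeping $D_+$ across $b$ can hit the same region twice with opposite effects. The paper first applies auxiliary bubble and $T^{m,n}$ moves inside $b$ to arrange that $\overline b$ is embedded. Second, sliding $\partial D_+$ across an edge of $\partial\overline b$ is not the percolation move $T_3$ (which pushes a branch through a vertex) but rather a move of type $T^{2,0}$ or $\widetilde T^{2,0}$, and this move carries an orientation compatibility condition: it is allowed only when the two regions being merged are compatibly oriented. The regions lying in $\partial\overline b$ need not all carry the orientation induced from $\overline b$, so the sweep may be obstructed. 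The paper resolves this by first proving a separate Claim---via a nontrivial ``doubling'' construction---that the orientation and label of any single region can be simultaneously inverted by primary moves, and then uses this Claim to pre-orient all regions on $\partial\overline b$ compatibly before sweeping. This orientation-reversal Claim is the real technical heart of the argument and is entirely absent from your outline. (A minor point: the action of $\lambda_{g,b}$ on a region adjacent to $b$ is one-sided multiplication by $g^{\pm 1}$, depending on which side faces $b$, not conjugation.)
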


\begin{proof}  We first define several further moves on $G$-skeletons
of $M$. In these definitions, we   apply  the same orientation
  and labeling conventions as above.  For any
non-negative integers $m, n$ with $m+n\geq 1$, we define a
 move $T^{m,n}$  on   $G$-skeletons, see
Figure~\ref{fig-Tmn-move}. The move $T^{m,n}$  destroys $\max(m-1,
0)$ small regions and creates $\max(n-1, 0)$ small regions. The
labelings transform in a unique way under $T^{m,n}$.
 For $n=0$, this move is allowed only  when the
orientations of the top and bottom regions on the left are
compatible.    It is shown in \cite{TVi} that $T^{m,n}$ is a
composition of primary moves (and the same argument works in the
$G$-labeled case). The move inverse to $T^{m,n}$ is $T^{n,m}$.

\begin{figure}[h,t]
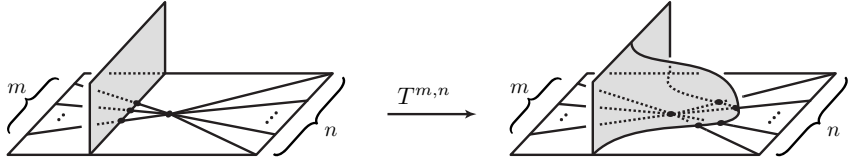

\begin{center}
\psfrag{T}[Bc][Bc]{\scalebox{1}{$T^{m,n}$}}
\psfrag{E}[cr][cr]{\scalebox{1.5}[3.1]{$\{$}}
\psfrag{L}[cl][cl]{\scalebox{1.5}[4.4]{$\}$}}
\psfrag{u}[cc][cc]{\scalebox{.9}{$m$}}
\psfrag{v}[cc][cc]{\scalebox{.9}{$n$}}
\rsdraw{.45}{.9}{movemn2-G}
\end{center}
\caption{The move $T^{m,n}$ on $G$-skeletons}
\label{fig-Tmn-move}
\end{figure}

In particular,  the moves $T^{2,0}$ and  $T^{0,2}$ push a branch of
a $G$-skeleton $P$ across a segment of $P^{(1)}$ containing a vertex
of valence 2. One can consider similar moves  $\widetilde T^{2,0}$
and $\widetilde T^{0,2}$ pushing   a branch of $P$ across a segment
of $P^{(1)}$   containing no vertices.  These   moves
decrease/increase  the number of vertices of the skeleton by 2.  The
moves $\widetilde T^{2,0}$ and $\widetilde T^{0,2}$ may be expanded
as compositions of  primary moves. Indeed, applying $T_2^{-1}$, we
can
  transform any point of $P^{(1)}$ into a vertex (keeping $P$
and $P^{(1)}$) and then use $T^{2,0}$ and $T^{0,2}$.

We need the modified version
  of the bubble move shown in Figure~\ref{fig-movesTT}.
It is easy to show that this move is a composition of   primary moves
(cf.\ \cite{TVi}, Section 7.2).

\begin{figure}[h, t]
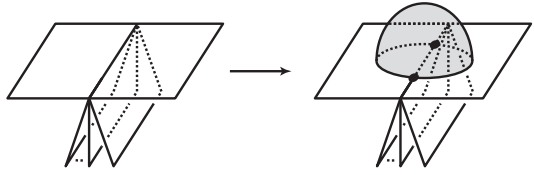

\begin{center}
\psfrag{T}[Bc][Bc]{\scalebox{.9}{}}  \rsdraw{.45}{.9}{moveT4pp}
\end{center}
\caption{A modified bubble move on $G$-skeletons}
\label{fig-movesTT}
\end{figure}

We now prove the following special case of the lemma.
\begin{claim}Let $P\subset M$ be a $G$-skeleton   of $M$ and
$r\in \Reg (P)$. Let $P_-$ be the same $G$-skeleton with both the
orientation and the label of $r$   inverted. Then there is   a
finite sequence of primary moves  transforming $P$ into $P_-$ (we
  write   $P\leadsto P_-$).
\end{claim}

\begin{proof}
It is enough to prove this  claim in the case where $r$ is a disk.
Indeed, if $r$ is not a disk, then we can  use $T_1$ to  add  new
edges to $P$ lying in $r$ and splitting $r$ into disks.
Consecutively reversing the orientation and the labels of these
disks and then removing the newly added edges by $T_1^{-1}$ we
obtain $P_-$. Therefore if our claim holds for
  disk regions of $G$-skeletons, then it also holds for $r$.

A similar argument shows that it is enough to consider the case
where   $r$ satisfies the following condition: $(\ast)$ the
restriction of the gluing  map $
\partial {\widetilde P} \to P^{(1)} $ (see Section \ref{strastra}) to    $\partial r$ is injective and all edges of $r$ (i.e., all edges of $P$ adjacent to $r$)
have valence $\geq 3$.   Indeed, adding  a bubble on each edge of
$r$ as in Figure \ref{fig-movesTT} and then pushing these bubbles
along the edges of $r$ and further across the vertices, we transform
$r$ into a smaller   region $r_0\subset r$ satisfying $(\ast)$.
Reversing the orientation and the label of $r_0$ and then removing
the newly added bubbles we obtain $P_-$. Therefore if our claim
holds for $r_0$, then it  holds for $r$.

Let now $r$ be a disk region of $P$ satisfying $(\ast)$ and let
$f\in G$ be the label of~$r$. Since $M$ and   $r$ are oriented, we
may speak about positive and negative   normal vectors on $r$.  Let
$r_{-} \subset M \setminus r $ be a 2-disk obtained by pushing~$r$
in the positive normal direction so that~$\partial r$ sweeps a
narrow annulus $A\subset P- \Int (r)$. It is understood that
$\partial A=
\partial r \amalg \partial r_{-}$ and $\partial r_{-}=r_{-}\cap P$. Then
 $\widehat P=P\cup r_{-}$ is a skeleton of~$M$ where the
regions of~$\widehat P$ contained in~$P$ receive the induced
orientation and
 the orientation of    $ r_{-}$ is opposite to that of~$r$.   Every region $R$ of $\widehat P$ distinct from  $r,r_{-}$ and not lying
  in $A$ is contained in a unique region of $P$.  We take the label of the latter region as the label of $R$.
  We endow $r,r_{-}$ with labels $h,h_-\in G$, respectively, such that $h h_-^{-1}=f$.
  The labels of the regions of $\widehat P$ lying in $A$ are determined uniquely by the product condition.
  This turns $\widehat P$ into a $G$-skeleton of $M$. We say that $\widehat P$ is obtained by
 {\it doubling} $r$.   We can transform~$\widehat P$ into~$P$ by
  moves of type $T^{m,n}, \widetilde T^{0,2}, \widetilde T^{2,0}$ pushing $ \partial r_{-}$ inside~$r$
  and an inverse bubble move eliminating the  bubble resulting from $r_-$.
 A similar elimination of~$ r$ transforms~$\widehat P$ into   $P_-$. Therefore~$P$ and~$P_-$ are related by primary
moves. This completes the proof of our claim.
\end{proof}
We can now  finish  the proof of   Lemma~\ref{lem-momovesl}. By
\cite{TVi}, Lemma 7.1, any two skeletons of $M$ can be related by a
finite sequence of primary moves. This   lifts to a sequence of
primary moves on $G$-skeletons. Therefore we need only to prove that
if $P_1=(P,\ell_1)$ and $P_2=(P,\ell_2)$ are two $G$-skeletons
of~$M$ with the same underlying skeleton~$P$, then there is a
sequence of primary moves   $P_1 \leadsto  P_2$.

By Section~\ref{sect-skeletons=},  the $G$-labelings $\ell_1$
and $\ell_2$ of $P$ lie in the same $\mathcal{G}_P$-orbit, where
$\mathcal{G}_P=\mathrm{Map}(\pi_0(M\setminus P),G)$ is the gauge
group of $P$. This group is generated by the maps  $\lambda_{g,b}\co
\pi_0(M\setminus P) \to G$, with $g\in G$ and $b$ a $P$-ball, where
$\lambda_{g,b}$ carries $b$ to $g$ and carries all other $P$-balls
to $1$. To prove Lemma~\ref{lem-momovesl}, it is enough to produce
for any $G$-labeling $\ell$ of $P$, any $g \in G$, and any $P$-ball
$b$, a sequence of primary moves    $(P,\ell) \leadsto
(P,\ell_{g,b}=\lambda_{g,b}  \ell)$.

%
%
%

Consider the bubble move $(P,\ell) \mapsto (P',\ell')$ attaching a
small disk inside $b$ to a region of $P$ adjacent to $b$.  Under
this move the ball $b$ splits into two balls: the small one (the
bubble) and the complementary one, $b'$. The   same bubble move
transforms   $(P,\ell_{g,b})$ into $(P',\ell'_{g,b'})$. Therefore if
there are primary moves transforming $(P',\ell')$ into
$(P',\ell'_{g,b'})$, then there are primary moves transforming
$(P,\ell)$ into $(P,\ell_{g,b})$. Similar arguments work  for   the
modified bubble move shown in Figure~\ref{fig-movesTT} and for the
moves $T^{m,n}$. Applying such moves inside $b$,  we can replace $P$
and $b$ with another pair still denoted $P$, $b$ such that the new
$P$-ball $b$ has one additional property: its closure $\overline b$
is a closed embedded 3-ball in $M$ with interior $b$. Then each
region of $P$ is adjacent to $b$ on one side or not at all.

Suppose   there is a  region $r_0$ of $P$   adjacent to $b$  such
that  the positive normal vectors on $r_0$ look inside $b$.
Consider the $G$-skeleton $(  P_-,  \ell_-)$ obtained from
$(P,\ell)$ by inverting the orientation and the label of $r_0$. By
the claim above, there are sequences of primary moves $(P,\ell)
\leadsto (  P_-,  \ell_-)$ and $(P,\ell_{g,b}) \leadsto (  P_-, (
\ell_{g,b})_-)$. Observe that $  ( \ell_{g,b})_- =(\ell_-)_{g,b}$.
Therefore if there is a sequence of primary moves
 $(  P_-,  \ell_-) \leadsto (  P_-,
(\ell_-)_{g,b})$, then there is a sequence of  primary moves
$(P,\ell)\leadsto (P, \ell_{g,b})$. Continuing by induction, we can
reduce ourselves to the case where   the orientation of all regions
of $P$ lying in the 2-sphere $\partial \overline b$ is induced by
that of $M$ restricted to $ \overline b$.

  We now produce a sequence of primary moves  $(P,\ell)\leadsto
(P,\ell_{g,b})$. Pick a region $r_0$ of $P$ adjacent to $b$. We
first apply a bubble move $(P, \ell)\mapsto (P', \ell')$ which adds
to $P$ a
  2-disk $D_+ \subset \overline b$  such that the circle  $\partial D_+$ lies in   $r_0$   and bounds a   2-disk $D_-\subset r_0$.
  We endow $D_-$ with the orientation induced by that of $r_0$ and orient $D_+$ so
   that $\partial D_+=\partial D_-$ in the category of  oriented manifolds.
  We label $D_-$ with $g^{-1} \ell (r_0) $   and $D_+$ with $g $
   (the orientations and the labels of all \lq\lq big" regions of $P'$ are
 the same as in $P$).  Next,
  we isotop  the disk $D_+$ in $b$ so that it  sweeps $b$ almost entirely   while its   boundary  slides along $\partial
\overline b$. We arrange that in the terminal position, $D'_{+}$,
of the moving disk  its boundary circle lies   in   $r_0 \setminus
D_-$
    and bounds there a   2-disk.   This isotopy of $D_+$ transforms the $G$-skeleton $(P', \ell')$ into a new $G$-skeleton $(P'',\ell'')$ via a
    sequence of moves    $T^{m,n}$, $\widetilde T^{2,0}$, and  $\widetilde T^{0,2}$. Under these moves, all regions of the
    intermediate skeletons lying in   $\partial \overline b$ are provided with orientation induced by that of $M$ restricted to $\overline  b$.
    This ensures that there are no orientation obstructions to the moves $  T^{2,0}$ and $\widetilde T^{2,0}$ that may appear in our sequence.
   Finally, the inverse bubble move,  removing $D'_{+}$, transforms  $(P'',\ell'')$  into
 $(P,\ell_{g,b})$.
\end{proof}

\subsection{Proof of   Theorem~\ref{thm-state-3man}} The state sum $|M|_\cc$ does not depend
on the choice of the representative set~$I$   by the naturality of
$\inv_{\cc}$ and of the contraction maps. Lemma~\ref{lem-momovesl}
shows that  to prove the rest of the theorem, we need only to prove
that $|M|_\cc$ is preserved under the primary moves $P \mapsto P'$.
This follows from the \lq\lq local invariance" which says that the
contribution of any
  $c\in \coul{P}$ to the state sum is equal to the sum of the contributions of all $c'\in \coul{P'}$   equal to $c$ on all big regions.
For the primary moves $T_1$, $T_2$, $T_3$,   this local invariance
was proved in    \cite{TVi}, Section 7.5.
 For the bubble move $T_4$, the local
invariance  follows from  Lemma \ref{bubbleidentity} where
  $U$ is the value of $c$ on the   region of $P$
  where the bubble is attached, $V=\un$,  and  $k,l $ are the values of $c'$ on the   disks $D_+$, $D_-$ created by
 the move. The   factor  $\dim (U)  \dim(\cc_1) $ is compensated by the change in
the number of components of $M\setminus P$  and in the Euler
 characteristic. We  use here the equality $\ast_e
 (\inv_{\cc} (\Gamma_v)) = N^{\un}_{U\otimes k\otimes l}$,  where
 $e$ and $v$ are respectively the   edge and the vertex forming the circle $\partial D_+= \partial D_-$.

\subsection{Remark}\label{rem-onemoreinv}   The right-hand side of
Formula~\eqref{eq-simplstatesum+}  is the product of $
(\dim(\cc_1))^{-\vert P\vert}$ and a certain sum which we denote
$\Sigma_\cc(P)$. The definition of $\Sigma_\cc(P)\in \kk$ does not
use the assumption that $ \dim(\cc_1) $ is invertible in~$\kk$ and
applies to an arbitrary spherical $G$-fusion category~$\cc$. This
allows us to generalize the invariant $ \vert
 M \vert_\cc$ of a closed $G$-manifold~$M$  to
 any such~$\cc$.  We use the theory of spines, see \cite{Mat1}. By a spine of $M$, we mean an oriented stratified 2-polyhedron $P\subset M$ such that $P$
  has at least 2 vertices,
 $P$ is locally
 homeomorphic to the cone over the 1-skeleton of a tetrahedron,  and  $M\setminus P$ is an open ball.   By
 \cite{Mat1}, $M$  has a   spine~$P$ and any two  spines of~$M$ can be related by  the   moves $T^{1,2}$, $T^{2,1}$
  in the class of  spines. The arguments above imply that   $\Sigma_\cc(P)$ is preserved under these moves.
  Therefore   $\vert \vert
 M\vert\vert_\cc= \Sigma_\cc(P)$   is a
 topological invariant of~$M$. If $\dim(\cc_1)$ is invertible, then $   \vert \vert
 M\vert\vert_\cc= \dim(\cc_1)\,  \vert
 M \vert_\cc$.

\section{The state-sum HQFT}\label{sec-TQFTfin}

 In generalization of
the state-sum invariant introduced in the previous section, we
derive from any spherical $G$-fusion category $\cc$ such that
$\dim(\cc_1)\in \kk$ is invertible   an HQFT $\vert \cdot \vert_\cc$
with target $\XX=K(G,1)$.

\subsection{Skeletons of $G$-surfaces}\label{sec-skesGGs} Let~$\Sigma$ be a pointed closed oriented
surface.        Recall that each component of $\Sigma$ has a  base
point and    $\Sigma_\bullet$ is the set of    the base points. A
\emph{skeleton} of~$\Sigma$ is an oriented graph $A\subset \Sigma $
such that all components of $\Sigma\setminus A$ are open disks, all
vertices of~$A$ have valence $\geq 2$, and $A\cap
\Sigma_\bullet=\emptyset$. For example, the vertices and the edges
of a triangulation of~$\Sigma$ (with an arbitrary orientation of the
edges) form a skeleton provided   the base points of $\Sigma$ lie
inside  the 2-faces.

A \emph{$G$-labeling} $\ell$ of a skeleton   $A$ of $ \Sigma$ is a
map
 from the set of edges of $A$ to $ G$ such that for any vertex
$v$ of $A$,
\begin{equation}\label{eq-conp}\prod_{a\in A_v} \ell(a)^{\varepsilon_v(a)}=1\,
,\end{equation} where the product is determined by the cyclic order
in $A_v$ and $\ell(a) \in G$ is the value of $\ell$ on  the edge of
$A$ containing the half-edge $a$ (see Section \ref{sect-graph} for
the definition of $A_v$ and $\varepsilon_v$).  A $G$-labeling $\ell$
of $A$ determines a homotopy class of maps $f_\ell:(\Sigma,
\Sigma_\bullet)\to (\XX,x)$ as follows  (cf.\ Section
\ref{sect-skeletons=}).  Pick a {\it central point} in each
component of $\Sigma \setminus A$  so that all base points of
$\Sigma $ are among these centers. Choose oriented arcs in $\Sigma$
dual to the edges of $A$ and connecting the  central points of the
adjacent regions (the interiors of the arcs must be disjoint and the
intersection number of each edge with the dual arc is $+1$). The map
$f_\ell$ carries all the central points to $x$ and carries the arcs
in question  to loops in $(\XX,x)$ representing the values of $\ell$
on the corresponding edges. Formula \eqref{eq-conp} ensures that
such a map $f_\ell$ exists.  It is clear that the homotopy class of
$f_\ell$ depends only on $A$ and $\ell$.  We call the pair $(A,
\ell)$ a {\it $G$-skeleton} of the $G$-surface $(\Sigma, f_\ell)$.

An appropriate choice of a $G$-labeling turns  any skeleton of a
$G$-surface   into a $G$-skeleton of this $G$-surface.

\subsection{Skeletons in dimension 3}\label{sec-skesdim333} We
 now extend the theory of  skeletons of closed $G$-manifolds  to  $G$-manifolds with
boundary. Given a compact oriented 3-manifold $M$ with pointed
boundary and a skeleton $A\subset
\partial M$, we define a
\emph{skeleton} of   $(M,A)$ to be an oriented
  stratified 2-polyhedron $P\subset M$ such that $P\cap
\partial M= \partial P$ and
\begin{enumerate}
  \labeli
    \item   $\partial P= A$  as graphs, i.e., $\partial P$ and $  A$ have the same vertices and edges;

\item for every vertex $v$ of $A$, there is a unique edge
$e_v$ of $P$   such that $v$  is an endpoint of $e_v$ and $e_v
\nsubseteq \partial M$; the edge $e_v$ is not a loop  and
$e_v\cap  \partial M =\{v\}$;

\item every edge $a$ of $A$  is an edge of~$P$  of valence 1; the
only region~$D_a$ of~$P$ adjacent to~$a$ is   a closed 2-disk
meeting $ \partial M$ precisely along $ a $; the orientation
of~$D_a$ is compatible with that of~$a$;

\item all components of $M\setminus P$ are
open or half-open 3-balls.
\end{enumerate}

 Note that  the boundary disks of the half-open
components of $M\setminus P$ are precisely  the components of  $
\partial M \setminus A$. Conditions (i)--(iii) imply that the
intersection of $P$ with a tubular neighborhood of~$\partial M$ in
$M$
  is  homeomorphic to
  $A\times [0,1]$. If an edge $e$ of $P$ has both endpoints in $\partial M$, then $e\subset A$ is an edge of $A$.

 Let now $M=(M,f\co (M, (\partial M)_\bullet )\to (\XX,x))$ be a
$G$-manifold   and let $A=(A, \ell)$ be a $G$-skeleton of the
$G$-surface $\partial M$.  Given a skeleton $P$  of   $(M,A)$,
consider  a
   map $\tilde \ell: \Reg (P)\to G$ such
that Formula~\eqref{labelidentity} holds for every edge of $P$ not
lying in $A$ and $\tilde \ell (D_a)=\ell(a)$ for every edge $a$ of
$A$.  The map $\tilde \ell$  determines a homotopy class of maps
$f_{\tilde \ell}\co (M, (\partial M)_\bullet )\to (\XX,x)$ as in
Section \ref{sect-skeletons=} where all points of $(\partial
M)_\bullet $ are chosen as the centers of the corresponding
components of $M\setminus P$. We say that  $\tilde \ell$ is a {\it
$G$-labeling} of  $P$ and     $(P,\tilde \ell)$ is a  {\it
$G$-skeleton of $(M,A)$} if $f_{\tilde \ell}=f$ is the given
homotopy class of maps. It is easy to see that  $(M,A)$
  has a skeleton  (cf.\ \cite{TVi}, Lemma 8.1) and every
skeleton of $(M,A)$ has a $G$-labeling turning it into a
$G$-skeleton.

The primary moves
 $T_1^{\pm 1} -  T_4^{\pm 1}$ defined above for  closed $G$-manifolds extend to $G$-skeletons  of
  $(M,A)$ in the obvious way. All these moves proceed inside 3-balls   in $\Int(M)$ and do not modify the boundary of the skeletons.   In particular, the move $T_1$
 adds
 an
 edge with  both endpoints in $\Int(M)$, the move~$T_2$ collapses an
 edge contained in $\Int(M)$, etc.
The action of the moves on the $G$-labelings is determined  by   the
requirement that the labels of the big regions are preserved under
the moves.    As in the  case of closed $G$-manifolds, the labelings
transform   uniquely under $T_1^{\pm 1}, T_2^{\pm 1}, T_3^{\pm 1},
T_4^{-1} $ and non-uniquely under $T_4$, where the label of $D_+$
may be an arbitrary element of~$G$.   These
 moves as well as  label-preserving
ambient isotopies of $G$-skeletons of $(M,A)$ keeping the boundary
pointwise are called {\it primary moves}. All primary  moves
transform $G$-skeletons of $(M,A)$ into  $G$-skeletons of $(M,A)$.

  \begin{lem}\label{lem-momovesl234}   Any
  two $G$-skeletons  of    $(M,A)$ can be related by a finite sequence of primary
moves in the class of $G$-skeletons  of    $(M,A)$.
\end{lem}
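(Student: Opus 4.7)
The plan is to mimic the proof of Lemma~\ref{lem-momovesl} in the bounded setting, being careful that all primary moves take place in $\Int(M)$ so that the boundary $G$-skeleton $A$ is left untouched. As a first reduction, I would invoke the bounded analog of \cite{TVi}, Lemma~7.1 (the unlabeled statement that any two skeletons of $(M,A)$ can be connected by moves $T_1^{\pm 1}\text{--}T_4^{\pm 1}$ supported in $\Int(M)$) to reduce to the case where the two $G$-skeletons have the same underlying stratified polyhedron~$P$. Each underlying move lifts to the $G$-labeled setting: the big-region labels are preserved by convention, and the small-region labels are either forced by the product condition \eqref{labelidentity} (for $T_1^{\pm 1}, T_2^{\pm 1}, T_3^{\pm 1}, T_4^{-1}$) or can be chosen freely subject to it (for $T_4$).

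Thus it remains to treat two $G$-skeletons $(P,\tilde\ell_1)$, $(P,\tilde\ell_2)$ of $(M,A)$ with the same underlying $P$. Since both represent the given homotopy class $f\co (M,(\partial M)_\bullet)\to (\XX,x)$ and agree on the boundary regions $\{D_a\}_{a\in A}$, the labelings $\tilde\ell_1$ and $\tilde\ell_2$ lie in the same orbit of the subgroup $\mathcal{G}_P^\partial\subset \mathcal{G}_P$ consisting of those $\lambda\co \pi_0(M\setminus P)\to G$ that send every $P$-ball meeting $\partial M$ to $1\in G$ (so that the forced labels of the $D_a$ are preserved). This subgroup is generated by the maps $\lambda_{g,b}$ with $g\in G$ and $b$ an \emph{interior} $P$-ball, i.e., one whose closure lies in $\Int(M)$. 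It therefore suffices, for any $G$-labeling $\tilde\ell$ of $P$, any $g\in G$ and any interior $P$-ball $b$, to produce a sequence of primary moves carrying $(P,\tilde\ell)$ to $(P,\lambda_{g,b}\tilde\ell)$.

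For such an interior $b$ the construction of the proof of Lemma~\ref{lem-momovesl} applies verbatim, since every step can be performed inside a closed ball neighborhood of $\overline b$ contained in $\Int(M)$. Concretely, I would first apply derived moves of type $T^{m,n}$, $\widetilde T^{2,0}$, $\widetilde T^{0,2}$ and modified bubble moves inside $b$ to arrange that $\overline b$ is an embedded closed $3$-ball in $\Int(M)$ with every region of $P$ adjacent to $b$ on at most one side. The auxiliary claim used in the closed case (that inverting the orientation and label of a region can be realized by primary moves, via doubling the region and eliminating a bubble) is purely local and transfers without change, so one can further assume the regions of $P$ meeting $\partial\overline b$ are cooriented outward from $\overline b$. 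Then the bubble-sweeping argument applies: attach by $T_4$ a disk $D_+\subset\overline b$ to a region $r_0$ adjacent to $b$, labeled by $g$, with sub-disk $D_-\subset r_0$ labeled by $g^{-1}\tilde\ell(r_0)$; sweep $D_+$ across $b$ by a sequence of $T^{m,n}$, $\widetilde T^{2,0}$, $\widetilde T^{0,2}$ moves until its boundary returns into $r_0$; remove it by $T_4^{-1}$. The net effect on the $G$-labeling is precisely $\tilde\ell\mapsto \lambda_{g,b}\tilde\ell$.

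The main obstacle is not conceptual but organizational: one must verify that the gauge subgroup relevant for $G$-skeletons of $(M,A)$ is indeed $\mathcal{G}_P^\partial$ and that it is generated by the $\lambda_{g,b}$ with $b$ interior, and one must check that during the sweep the orientation-compatibility conditions required by the moves $T^{2,0}$ and $\widetilde T^{2,0}$ can be arranged inside $\Int(M)$ without disturbing $\partial M$. Both points are handled, as in the closed case, by the preparatory orientation adjustments on $\partial\overline b$ and by the fact that $\overline b\subset\Int(M)$. Once a single generator $\lambda_{g,b}$ is realized, factoring $\tilde\ell_2=\lambda\tilde\ell_1$ as a product of such generators and composing the corresponding sequences of primary moves completes the argument.
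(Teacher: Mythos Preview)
Your proposal is correct and follows essentially the same route as the paper: the paper's proof simply says to reproduce the proof of Lemma~\ref{lem-momovesl} with obvious changes, replacing the reference to \cite[Lemma~7.1]{TVi} by \cite[Lemma~8.1]{TVi} (the bounded version, exactly the input you invoke). You have spelled out in detail what those ``obvious changes'' are --- restricting the gauge group to interior $P$-balls and keeping all moves inside $\Int(M)$ --- and correctly flagged the only points that need checking.
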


\begin{proof} The proof reproduces the proof of Lemma
\ref{lem-momovesl} with obvious changes. Instead of  Lemma 7.1 of
\cite{TVi}  we should use Lemma 8.1 of \cite{TVi} which says that
any two skeletons of $(M,A)$ can be related by primary moves in~$M$.
\end{proof}

\subsection{Invariants of  pairs $(M,A)$}\label{sec-Io3m+}
  Fix up to the end of Section~\ref{sec-TQFTfin}  a   spherical fusion
$G$-category~$\cc$ over~$\kk$ such that $ \dim(\cc_1)$  is
invertible in~$\kk$.   We shall derive from  $ \cc$   a
3-dimensional HQFT $\vert \cdot \vert_\cc$ with target $\XX=K(G,1)$.
The construction   proceeds in three steps described in this and the
next two sections.

Fix  a representative set $I=\amalg_{g\in G}\, I_g$ of simple
objects of $\cc$. By an {\it $I$-coloring}  of a $G$-skeleton $(A,
\ell)$ of a $G$-surface, we mean a map $c$ from the set of edges of
$A$ to $I$ such that $c(a)\in I_{\ell(a)}$ for all edges $a$ of $A$.
Note that an $I$-colored $G$-skeleton is   $\cc$-colored in the
sense of Section \ref{sect-graph} so that the definitions and
notation of that section apply.

For a  $G$-manifold  $M$  and an $I$-colored  $G$-skeleton $A =(A,
\ell, c)$ of $\partial M$,   we define   a topological invariant
$|M, A| \in \kk$ as follows. Pick a $G$-skeleton~$P=(P,\tilde \ell)$
of $(M,A)$. Let $\coul{P,c}$ be the set of all  maps  $\tilde c \co \Reg(P)
\to I$ such that $\tilde c(r)\in I_{\tilde \ell(r)}$ for all $r\in
\Reg(P)$ and $\tilde c(D_a)=c(a)$ for all edges $a$ of $A$. For
every $\tilde c \in \coul{P,c} $ and every oriented edge $e$ of $P $,
consider the  \kt module $H_{\tilde c}(e)=H(P_e)$, where~$P_e$ is
the set of branches of~$P$ at~$e$ turned into a cyclic $\cc$\ti set
as  in Section~\ref{sec-computat} (with $c$ replaced by~$\tilde c$).
Let $E_0$ be the set of oriented edges of~$P$ with both endpoints in
$\Int(M)$, and let~$E_\partial$ be the set of edges of~$P$ with
exactly one endpoint  in $\partial M$ oriented towards this
endpoint. Every vertex~$v$ of~$A$ is incident to  a unique edge
$e_v$ belonging to $ E_\partial$ and $ H_{\tilde c}(e_v) =
H_v(A^\opp;-\partial M)$. Therefore
 $$
 \otimes_{e\in E_\partial}\, H_{\tilde c}(e)^\star=\otimes_v \, H_v(A^\opp;-\partial M)^\star=H(A^\opp;-\partial M)^\star.
 $$
For  $e\in E_0$,  the equality $P_{e^\opp}=(P_e)^\opp$  induces a
duality between
 the modules
 $H_{\tilde c}(e)$, $H_{\tilde c}(e^\opp)$ and a
 contraction homomorphism  $   H_{\tilde c}(e)^\star \otimes H_{\tilde c}(e^\opp)^\star \to\kk$.  This contraction does not depend on the orientation of~$e$
  up to permutation of the factors. Applying these contractions, we obtain a homomorphism
$$\ast_P \co \otimes_{e\in E_0 \cup E_\partial}\, H_{\tilde c}(e )^\star \longrightarrow  \otimes_{e\in E_\partial}\, H_{\tilde c}(e)^\star=H(A^\opp;-\partial M)^\star .$$
 As in Section~\ref{sect-skeletons},
any  vertex   $v$ of~$P $ lying in $\Int(M)$ determines an oriented
graph~$\Gamma_v$ on $S^2$, and   $\tilde c$ turns $\Gamma_v$ into a
$\cc$-colored graph. Section~\ref{spherical}   yields a tensor
$\inv_\cc (\Gamma_v) \in H_{\tilde c}(\Gamma_v)^*$. Here
 $H_{\tilde c}(\Gamma_v)= \otimes_e\, H_{\tilde c}(e)$, where~$e$ runs over all edges of~$P$ incident to~$v$ and oriented away from~$v$. The tensor product $\otimes_v \,\inv_\cc (\Gamma_v)$ over all
vertices~$v$ of~$P$  lying in $\Int(M)$  is a vector in
 $\otimes_{e\in E_0 \cup E_\partial}\, H_{\tilde c}(e )^\star $.

\begin{thm}\label{thm-state-3man+r}
Set
\begin{equation*} |M,A |= (\dim(\cc_1))^{-\vert P\vert} \sum_{\tilde c \in \coul{P,c} } \,\,  \left ( \prod_{r \in \Reg(P)} (\dim \tilde c(r))^{\chi(r)} \right ) \,
  {\ast}_P ( \otimes_v \,\inv_\cc (\Gamma_v)) , \end{equation*}
where   $\vert P\vert$ is the number of components of $M\setminus P$
and  $\chi$ is the Euler characteristic. Then $|M,A |\in
H(A^\opp;-\partial M)^\star$ does not depend on the choice of~$I$
and $P$.
\end{thm}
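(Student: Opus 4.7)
The plan is to mirror the proof of Theorem \ref{thm-state-3man} and adapt it to manifolds with boundary. I would first verify that the expression on the right-hand side lies in the claimed module: by construction $\ast_P$ contracts only the factors indexed by interior edges $e \in E_0$, leaving the factors $\otimes_{e \in E_\partial} H_{\tilde c}(e)^\star$ untouched, and the latter is identified with $H(A^\opp;-\partial M)^\star$ as noted in the definition. The sum is finite because $\coul{P,c}$ is finite. Independence from the choice of the representative set $I$ is immediate from the naturality property of $\inv_\cc$ recorded in Section \ref{spherical}(i) together with the naturality of the contraction pairings from Section \ref{muliplicity modules}; the argument is exactly the one used in the closed case.

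The substantial point is independence from the $G$-skeleton $P$ of $(M,A)$. I would invoke Lemma \ref{lem-momovesl234} to reduce to showing invariance under each primary move $T_i^{\pm 1}$ ($i = 1,\ldots,4$) and under label-preserving ambient isotopies of $P$ fixing $\partial M$ pointwise. The structural observation that makes the closed-case argument transfer is that every primary move is supported in a 3-ball contained in $\Int(M)$ and preserves the boundary $G$-skeleton $(A,\ell,c)$. Consequently the set $E_\partial$, the boundary disks $\{D_a\}_a$ and their colors, and the target module $H(A^\opp;-\partial M)^\star$ itself are all preserved by the move. Invariance then reduces to the same \emph{local} identity as in the closed case: for every $\tilde c \in \coul{P,c}$, its contribution equals the sum of the contributions of all $\tilde c' \in \coul{P',c}$ that agree with $\tilde c$ on the big regions of $P$.

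For the moves $T_1$, $T_2$, $T_3$ this local identity is proved in \cite{TVi}, Section 7.5, by a computation entirely internal to the $\cc$-colored graphs in $S^2$ associated with the vertices being modified, and the same computation applies verbatim. For the bubble move $T_4$ one uses Lemma \ref{bubbleidentity}(b) with $V = \un$ and $U$ equal to the color of the region to which the bubble is attached; the key ingredient is the equality $\ast_e(\inv_\cc(\Gamma_v)) = N^{\un}_{U \otimes k \otimes \ell}$ for the new edge $e$ and new vertex $v$ on $\partial D_+ = \partial D_-$, so that summation over the colors $k, \ell$ of the new disk regions $D_+, D_-$ produces the factor $\dim_r(U)\dim(\cc_1)$, which is absorbed by the change of $\vert P\vert$ by $+1$ and by the Euler characteristic drop in the region carrying the bubble, exactly as in the proof of Theorem \ref{thm-state-3man}.

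The main obstacle is essentially bookkeeping: one must check that the presence of the boundary does not interfere with the local invariance arguments from the closed case. The resolution is the observation already highlighted: primary moves never touch $\partial M$, so the factors in $\otimes_{e \in E_\partial} H_{\tilde c}(e)^\star$ are passive spectators during each local computation and are carried along by $\ast_P$ unchanged into the output module $H(A^\opp;-\partial M)^\star$.
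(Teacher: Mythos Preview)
Your proposal is correct and follows essentially the same approach as the paper: reduce via Lemma~\ref{lem-momovesl234} to invariance under primary moves, then invoke the local-invariance arguments from the proof of Theorem~\ref{thm-state-3man} (naturality for~$I$, \cite{TVi} Section~7.5 for $T_1,T_2,T_3$, and Lemma~\ref{bubbleidentity} for $T_4$). The paper's own proof is a two-sentence pointer to exactly this reasoning; your additional observation that primary moves are supported in $\Int(M)$ and hence leave the $E_\partial$-factors untouched is precisely the bookkeeping that justifies the phrase ``exactly as in the proof of Theorem~\ref{thm-state-3man}''.
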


\begin{proof} Since any two $G$-skeletons  of $(M,A)$ are related by primary moves, we need only to verify the invariance of
$|M,A|$ under these moves. This   is done exactly as in the proof of
Theorem~\ref{thm-state-3man}.
\end{proof}

Though there is a canonical isomorphism $H(A^\opp;-\partial
M)^\star\simeq H(A;\partial M)$ (see  Section~\ref{sect-graph}), it
is convenient  to view  $|M,A |$ as a vector in  $
H(A^\opp;-\partial M)^\star$.

\subsection{Functoriality}\label{sec-Io3m++}
Consider a   $G$-manifold $M$ whose boundary is a disjoint union of
two $G$-surfaces  $\Sigma_0$ and $\Sigma_1$. More precisely, we
assume that  $\partial M=(-\Sigma_0) \amalg \Sigma_1$ (as
$G$-surfaces). Given  an $I$-colored $G$-skeleton $A_i$ of
$\Sigma_i$ for $i=0,1$, we  form the $I$-colored $G$-skeleton
$A_0^\opp \cup A_1$ of $\partial M$.
 Theorem~\ref{thm-state-3man+r} yields
  a vector
  $$|M,A_0^\opp \cup A_1 | \in H(A_0 \cup  A_1^\opp, -\partial M)^\star= H(A_0, \Sigma_0)^\star \otimes H( A_1^\opp, -\Sigma_1)^\star.$$
The canonical isomorphism $H( A_1^\opp, -\Sigma_1)^\star\simeq H(
A_1, \Sigma_1)$ induces an isomorphism
$$
\Upsilon\co H(A_0, \Sigma_0)^\star \otimes H( A_1^\opp, -\Sigma_1)^\star \to \Hom_\kk\bigl (H(A_0, \Sigma_0),H( A_1, \Sigma_1)  \bigr ).
$$
Set $$|M, \Sigma_0, A_0,  \Sigma_1, A_1|= \frac{
(\dim(\cc_1))^{\vert A_1\vert}}{\dim (A_1)} \,  \Upsilon
\bigl(|M,A_0^\opp \cup A_1|\bigr)\co H(A_0;\Sigma_0) \to
H(A_1;\Sigma_1)
 ,$$
 where   $\vert A_1 \vert$ is the number of components of $\Sigma_1 \setminus A_1$ and $\dim ({A_1})$
is the product of the dimensions of the simple objects of~$\cc$
associated with the edges
 of~$A_1$ by the given $I$-coloring of $A_1$. By definition,  if
 $ \Sigma_1=\emptyset$, then $A_1 =\emptyset$, $\vert A_1\vert=0$, and $\dim
 ({A_1})=1$.

 \begin{lem}\label{lem-skeletons81}
Let $ M_i $ be a  $G$-manifold  with $\partial M_i=(-\Sigma_i)
\amalg \Sigma_{i+1}$, where $\Sigma_i, \Sigma_{i+1}$ are
$G$-surfaces and $i=0,1$. Let $ M$ be the $G$-manifold obtained by
gluing $M_0$ and $M_1$ along $\Sigma_1$ so that $\partial M
=(-\Sigma_0) \amalg \Sigma_{2}$. For any $I$-colored $G$-skeletons
$A_0\subset \Sigma_0$, $A_2\subset \Sigma_2$ and any
$G$-skeleton~$A_1$ of $\Sigma_1$,
$$|M, \Sigma_0, A_0,  \Sigma_2, A_2|=\sum_{c }
 \, |M, \Sigma_1, (A_1,c),  \Sigma_2, A_2| \circ |M, \Sigma_0, A_0,  \Sigma_1, (A_1,c) |,$$
 where $c$ runs over all $I$-colorings of~$A_1$.
\end{lem}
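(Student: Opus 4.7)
The plan is to realize both sides of the equality as two computations of the same state sum on a single common skeleton, following the strategy of \cite{TVi}, Section~8, in the case $G=1$.

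First, choose a $G$-skeleton $P_0$ of $(M_0, A_0^\opp\cup A_1)$ and a $G$-skeleton $P_1$ of $(M_1, A_1^\opp\cup A_2)$. By Theorem~\ref{thm-state-3man+r} and Lemma~\ref{lem-momovesl234}, the values of the invariants $|M_i,\cdots|$ do not depend on these choices, so it is enough to work with these particular skeletons. The union $P=P_0\cup P_1$ is naturally a $G$-skeleton of $(M,A_0^\opp\cup A_2)$: the two polyhedra meet exactly along $A_1$, which becomes an interior subgraph of $P$ since $\Sigma_1$ is interior in $M$, and the orientation convention on the boundary disks $D_a^0\subset P_0$ and $D_a^1\subset P_1$ for $a$ an edge of $A_1$ makes the product condition~\eqref{labelidentity} automatic at each new interior edge of $P$. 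It thus suffices to match both sides of the lemma using this common skeleton.

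The state sum on $P$ decomposes term by term. The set of regions splits as $\Reg(P)=\Reg(P_0)\sqcup\Reg(P_1)$; the interior vertices of $P$ are the interior vertices of $P_0$, those of $P_1$, and the vertices of $A_1$ (now interior in $M$); and the $P$-balls are obtained from the $P_0$-balls and $P_1$-balls by identifying, for each component $D$ of $\Sigma_1\setminus A_1$, the unique $P_0$-ball and the unique $P_1$-ball adjacent to $D$, so that $\vert P\vert=\vert P_0\vert+\vert P_1\vert-\vert A_1\vert$ with $\vert A_1\vert$ the number of components of $\Sigma_1\setminus A_1$. Consequently a coloring $\tilde c\in \coul{P,c_{A_0}\cup c_{A_2}}$ is uniquely encoded by a triple $(c,\tilde c_0,\tilde c_1)$ where $c$ is an $I$-coloring of $A_1$ and $\tilde c_i\in\coul{P_i,\cdots}$ extends $c$ to $P_i$; the region weights, the edge contractions attached to interior edges of $P_i$, and the graph contributions $\inv_\cc(\Gamma_v)$ for interior vertices of $P_i$ all split as products of the corresponding data for $P_0$ and for $P_1$.

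The crux of the proof is the factorization of the remaining contributions, coming from the vertices and edges of $A_1$. For each vertex $v$ of $A_1$, the link $\Gamma_v\subset\partial B_v\cong S^2$ decomposes as $\Gamma_v=\Gamma_v^0\cup\Gamma_v^1$, where $\Gamma_v^i$ sits in the hemisphere $\partial B_v\cap M_i$ and the two halves meet on the equatorial circle $\partial B_v\cap\Sigma_1$ at the points dual to the edges of $A_1$ incident to $v$. By the definition of $\inv_\cc$ via composition of morphisms in $\cc$ and the sphericity of $\cc$ (Section~\ref{spherical}), the tensor $\inv_\cc(\Gamma_v)$ equals the contraction of $\inv_\cc(\Gamma_v^0)\otimes\inv_\cc(\Gamma_v^1)$ along the cone isomorphisms attached to these equatorial points. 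Combined with the edge contractions $\ast_a$ at the now-interior edges $a$ of $A_1$ in $P$, these identities reassemble into precisely the pairing along $H(A_1,\Sigma_1)$ that defines the composition $\Upsilon(|M_1,\cdots|)\circ\Upsilon(|M_0,\cdots|)$. Finally, one checks the normalization: $(\dim(\cc_1))^{-\vert P\vert}=(\dim(\cc_1))^{-\vert P_0\vert}(\dim(\cc_1))^{-\vert P_1\vert}(\dim(\cc_1))^{\vert A_1\vert}$, and the extra $(\dim(\cc_1))^{\vert A_1\vert}$ matches exactly the prefactor appearing in the definition of $|M_0,\Sigma_0,A_0,\Sigma_1,(A_1,c)|$, while the inverse factor $\dim(A_1,c)^{-1}$ is absorbed into the contraction step above, which rewrites the evaluation data attached to the disks $D_a^0,D_a^1$ as the nondegenerate pairing $\omega$ on the symmetrized multiplicity modules. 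The main obstacle is tracking orientations and cone isomorphisms consistently when passing from $\Sigma_1$ to $-\Sigma_1$ between $M_0$ and $M_1$, but no new conceptual ideas are required beyond the $G=1$ case treated in \cite{TVi}, Section~8.
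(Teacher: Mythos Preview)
Your strategy---glue a $G$-skeleton $P_0$ of $(M_0,A_0^\opp\cup A_1)$ to a $G$-skeleton $P_1$ of $(M_1,A_1^\opp\cup A_2)$ and read both sides of the identity off the resulting $G$-skeleton $P$ of $(M,A_0^\opp\cup A_2)$---is exactly the paper's, and your count $|P|=|P_0|+|P_1|-|A_1|$ is the paper's first bookkeeping identity.

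Where you diverge is in the choice of stratification on $P$, and this is what makes your handling of the factor $\dim(A_1,c)^{-1}$ hazier than it needs to be. You take $\Reg(P)=\Reg(P_0)\sqcup\Reg(P_1)$, i.e.\ you keep every edge $a$ of $A_1$ as a $2$-valent interior edge of $P$ separating $D_a^0$ from $D_a^1$, and you keep every vertex of $A_1$ as an interior vertex of $P$. With that choice the region weights already match ($\dim c(a)^2$ on each side), so the factor $\dim(A_1,c)^{-1}$ has to come from the new interior data along $A_1$: the tensors $\inv_\cc(\Gamma_v)$ for $v$ a vertex of $A_1$ together with the rank-one contractions $\ast_a$. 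Your phrase ``absorbed into the contraction step'' is pointing at a genuine computation---one must evaluate $\inv_\cc$ on the prism-like link $\Gamma_v$ and check that, combined with the $\ast_a$, it reproduces the composition pairing on $H(A_1;\Sigma_1)$ up to exactly $\prod_a\dim c(a)^{-1}$---and you have not carried it out.

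The paper avoids this computation by taking the glued skeleton with the edges of $A_1$ \emph{erased}: for each edge $e$ of $A_1$ the two adjacent disks fuse into a single region $r=D_e^0\cup e\cup D_e^1$ of $P$. Then there are no new vertex or edge contributions along $A_1$ at all, and the factor $\dim(A_1,c)^{-1}$ drops out immediately from the Euler-characteristic identity $\chi(r)=\chi(D_e^0)+\chi(D_e^1)-1$. That one line is the entire content of the paper's proof beyond the ball count you already have.
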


\begin{proof}   This  follows from the definitions since
 the union of a  $G$-skeleton,  $P_0$, of  $(M_0, A_0^\opp \cup A_1)$  with a  $G$-skeleton, $P_1$, of   $(M_1, A_1^\opp \cup A_2)$ is a
  $G$-skeleton, $P$, of  $(M, A_0^\opp \cup A_2)$ and $\vert
  P\vert = \vert
  P_0\vert+\vert
  P_1\vert -\vert A_1 \vert$. The   term $-\vert A_1 \vert$ explains the need for the factor $(\dim(\cc_1))^{\vert A_1\vert}$ in the definition of
  $ |M, \Sigma_0, A_0,  \Sigma_1, A_1|$. Similarly, given  a region  $r_1$  of $P_1$ and a region $r_2$ of $P_2$ adjacent to the same edge $e$ of $A_1$, the union $r=r_1 \cup r_2 \cup e$
  is a region of $P$ and $\chi (r)=\chi(r_1)+\chi (r_2)-1$. The   term $-1$
    explains the need for the factor $(\dim (A_1))^{-1}$ in
  the definition of $ |M, \Sigma_0, A_0,  \Sigma_1, A_1|$.
\end{proof}

\subsection{The  HQFT $\vert \cdot \vert_{\cc}$}\label{sec-TQFT-}
For a $G$-skeleton $A$ of a $G$-surface $\Sigma$, denote by
$\mathrm{Col} ({A})$   the set of all $I$-colorings of~$A$. Set
$$\vert A; \Sigma\vert^\circ =\oplus_{c\in \mathrm{Col}   ({A})} \,  H((A,c); \Sigma).$$
  Given a $G$-cobordism   $(M, h\co
(-\Sigma_0) \sqcup\Sigma_1 \simeq  \partial M)$ between $G$-surfaces
$\Sigma_0$ and $\Sigma_1$, we now  define for any $G$-skeletons
$A_0\subset \Sigma_0$ and $A_1\subset \Sigma_1$ a homomorphism
\begin{equation}\label{homom}
|M, \Sigma_0, A_0, \Sigma_1, A_1|^\circ \co \vert
A_0; \Sigma_0\vert^\circ\to \vert A_1; \Sigma_1\vert^\circ.
\end{equation}
For $i=0,1$ denote by $\Sigma'_i$ the $G$-surface
$h(\Sigma_i)\subset
\partial M$ with orientation induced by the one in $\Sigma_i$.  Then $A'_i= h(A_i)$ with the $G$-labeling induced by that of $A_i$ is a $G$-skeleton of $\Sigma'_i$.
Consider the homomorphism
\begin{equation}\label{eq-func-}
\sum_{\substack{c_0 \in \mathrm{Col}(A'_0)\\ c_1 \in
\mathrm{Col}(A'_1)}} |M, \Sigma'_0, (A'_0, c_0),  \Sigma'_1, (A'_1,
c_1)| \co \vert A'_0; \Sigma'_0\vert^\circ\to \vert A'_1;
\Sigma'_1\vert^\circ ,
\end{equation}
where
$$
|M, \Sigma'_0, (A'_0,
c_0), \Sigma'_1, (A'_1, c_1)|\co H((A'_0,c_0); \Sigma'_0) \to
H((A'_1,c_1); \Sigma'_1).
$$
Conjugating  \eqref{eq-func-} by the
obvious  isomorphisms   $\{ \vert A_i; \Sigma_i\vert^\circ  \cong
\vert
 A'_i  ; \Sigma'_i\vert^\circ\}_{i=0,1}$ induced by $h$, we obtain
the homomorphism \eqref{homom}.
 Lemma~\ref{lem-skeletons81} implies
that for any $G$-cobordisms $M_0, M_1, M$ as in this lemma and for
any $G$-skeletons $\{A_i \subset \Sigma_i\}_{i=0}^2 $,
\begin{equation}\label{eq-func} |M, \Sigma_0, A_0,  \Sigma_2, A_2|^\circ=|M,
\Sigma_1, A_1,  \Sigma_2, A_2|^\circ \circ  |M, \Sigma_0, A_0,
\Sigma_1, A_1|^\circ.\end{equation}

These constructions assign a finitely generated free \kt module to
every $G$-surface with distinguished $G$-skeleton and   a
homomorphism of these modules to every $G$-cobordism whose bases are
endowed with $G$-skeletons.
   This data   satisfies an
appropriate version of the axioms of an HQFT except one:   the
homomorphism associated with the cylinder over a $G$-surface,
generally speaking, is not the identity. There is a standard
procedure which transforms such a \lq \lq pseudo-HQFT"   into a
genuine
  HQFT and also gets rid of the skeletons. This procedure is described   in a similar situation  in
\cite{Tu0}, Section VII.3; we outline it in our setting.

Observe that if $A_0,A_1$ are two $G$-skeletons of a $G$-surface
$\Sigma$, then the cylinder $G$-cobordism $ \Sigma\times [0,1]$
gives a homomorphism
$$p(A_0,A_1)=|\Sigma\times [0,1], \Sigma \times \{0\}, A_0 \times \{0\},  \Sigma \times \{1\}, A_1\times \{1\}|^\circ \colon \vert A_0; \Sigma\vert^\circ \to \vert A_1; \Sigma\vert^\circ\, .$$
Formula~\eqref{eq-func} implies that  $p(A_0,A_2)=p(A_1,A_2)\,
p(A_0,A_1)$ for any $G$-skeletons $A_0$, $A_1$, $A_2$ of~$\Sigma$.
Taking $A_0=A_1=A_2$ we obtain that $p(A_0,A_0)$ is a projector onto
a direct summand $\vert A_0; \Sigma\vert $  of $\vert A_0;
\Sigma\vert^\circ $. Moreover,
  $p(A_0,A_1)$ maps
$\vert A_0; \Sigma\vert $ isomorphically onto $\vert A_1;
\Sigma\vert $. The finitely generated projective $\kk$-modules
$\{\vert A; \Sigma\vert\}_A$,
 where $A$ runs over all $G$-skeletons of $\Sigma$, and the
homomorphisms $\{p(A_0,A_1)\}_{A_0, A_1}$ form a projective system.
The projective limit of this system is a $\kk$-module   independent
of the choice of a $G$-skeleton of $\Sigma$, and we denote it by
$\vert \Sigma\vert_\cc $.   For each $G$-skeleton $A$ of~$\Sigma$,
we have a  \lq\lq cone isomorphism"    $\vert A; \Sigma\vert \cong
\vert \Sigma\vert_\cc$.   By convention, the empty surface
$\emptyset$   has a unique (empty) skeleton and   $\vert \emptyset
\vert_\cc =\kk$.

Any $G$-cobordism $(M,\Sigma_0, \Sigma_1)$   splits as a product of
a $G$-cobordism with a $G$-cylinder over $\Sigma_1$. Using this
splitting and Formula~\eqref{eq-func}, we obtain that the
homomorphism~\eqref{eq-func-} carries $ \vert \Sigma_0\vert_\cc
\cong \vert A_0; \Sigma_0\vert \subset \vert A_0;
\Sigma_0\vert^{\circ} $ into $  \vert \Sigma_1\vert_\cc  \cong \vert
A_1; \Sigma_1\vert \subset \vert A_1; \Sigma_1\vert^{\circ} $ for
any $G$-skeletons $A_0, A_1$ of $\Sigma_0, \Sigma_1$, respectively.
This gives a homomorphism   $|M, \Sigma_0, \Sigma_1 |_\cc \colon
\vert \Sigma_0\vert_\cc\to \vert \Sigma_1\vert_\cc$   independent of
the choice of $A_0$ and $A_1$. Moreover, two $G$-cobordisms
representing the same morphism $\varphi\co \Sigma_0\to \Sigma_1$ in
$\mathrm{Cob}_3^G$ give rise to the same homomorphism   $\vert
\varphi\vert_\cc \co \vert \Sigma_0\vert_\cc\to \vert
\Sigma_1\vert_\cc$. By construction, $\vert
\id_\Sigma\vert_\cc=\id_{\vert \Sigma \vert_\cc}$.    The assignment
   $\Sigma\mapsto \vert \Sigma \vert_\cc$, $\varphi \mapsto
|\varphi |_\cc $ defines a functor
$\vert\cdot\vert_\cc\co\mathrm{Cob}_3^G \to \mathrm{vect}_\kk$.
The   results above imply the following theorem.

\begin{thm}\label{thm-state-sum-HQFT}
The functor $\vert \cdot \vert_\cc$ is a 3-dimensional  HQFT   with target $\XX$.
\end{thm}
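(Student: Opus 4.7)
The plan is to collect the pieces of structure already set up in Section~\ref{sec-TQFTfin} and verify the axioms of a symmetric monoidal functor $\mathrm{Cob}_3^G\to\mathrm{vect}_\kk$. The assignment on objects $\Sigma\mapsto\vert\Sigma\vert_\cc$ and on morphisms $\varphi\mapsto\vert\varphi\vert_\cc$ has already been defined, so what remains is to check (i) functoriality, (ii) existence of coherent natural isomorphisms $\vert\Sigma\sqcup\Sigma'\vert_\cc\cong\vert\Sigma\vert_\cc\otimes\vert\Sigma'\vert_\cc$ and $\vert\emptyset\vert_\cc\cong\kk$, and (iii) compatibility with the symmetric braidings on both categories.

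First I would address functoriality. The equality $\vert\id_\Sigma\vert_\cc=\id_{\vert\Sigma\vert_\cc}$ was already observed, and the composition property $\vert\psi\circ\varphi\vert_\cc=\vert\psi\vert_\cc\circ\vert\varphi\vert_\cc$ follows from Lemma~\ref{lem-skeletons81}, i.e.\ from~\eqref{eq-func}, after passing through the cone isomorphisms $\vert\Sigma\vert_\cc\cong\vert A;\Sigma\vert$ for chosen $G$-skeletons $A$ of the relevant $G$-surfaces. The one subtle point is that the homomorphism~\eqref{eq-func-} preserves the direct summand $\vert A;\Sigma\vert\subset\vert A;\Sigma\vert^\circ$ cut out by the projector $p(A,A)$; this is built into the definition of $\vert M,\Sigma_0,\Sigma_1\vert_\cc$ by writing $M$ as its own composition with a $G$-cylinder and applying~\eqref{eq-func}, as indicated in the paragraph preceding the theorem.

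Next I would construct the monoidal coherence isomorphisms. For $G$-surfaces $\Sigma,\Sigma'$ with chosen $G$-skeletons $A,A'$, the disjoint union $A\sqcup A'$ is a $G$-skeleton of $\Sigma\sqcup\Sigma'$; property~(iv) of $\inv_\cc$ in Section~\ref{spherical} combined with the multiplicativity $H(A\sqcup A';\Sigma\sqcup\Sigma')=H(A;\Sigma)\otimes H(A';\Sigma')$ of multiplicity modules yields an isomorphism $\vert A\sqcup A';\Sigma\sqcup\Sigma'\vert^\circ\cong\vert A;\Sigma\vert^\circ\otimes\vert A';\Sigma'\vert^\circ$. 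To promote this to an isomorphism on the direct summands $\vert A;\Sigma\vert$ I would observe that a $G$-skeleton of the cylinder over $\Sigma\sqcup\Sigma'$ may be chosen as the disjoint union of $G$-skeletons of the two individual cylinders; then Theorem~\ref{thm-state-3man+r} implies that the state sum on this skeleton factorizes as a tensor product, so $p(A\sqcup A',A\sqcup A')=p(A,A)\otimes p(A',A')$. Passing to cone isomorphisms gives the natural isomorphism $\vert\Sigma\sqcup\Sigma'\vert_\cc\cong\vert\Sigma\vert_\cc\otimes\vert\Sigma'\vert_\cc$; naturality in cobordisms is the analogous factorization of state sums on disjoint unions of $G$-cobordisms, again via Theorem~\ref{thm-state-3man+r}. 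The unit isomorphism $\vert\emptyset\vert_\cc\cong\kk$ holds by convention, and one checks the left/right unitor axioms tautologically.

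Finally I would verify compatibility with the symmetries. The symmetry of $\mathrm{Cob}_3^G$ at $(\Sigma,\Sigma')$ is represented by the cylinder $(\Sigma\sqcup\Sigma')\times[0,1]$ with bases identified via the swap; choosing a $G$-skeleton on it as the disjoint union of two cylindrical $G$-skeletons and invoking the factorization established in the previous paragraph identifies its state sum with the tensor-swap on $\vert\Sigma\vert_\cc\otimes\vert\Sigma'\vert_\cc$. The associator coherences reduce similarly to tautological identifications of state sums on triple disjoint unions. The main obstacle I expect is the careful bookkeeping of the direct-summand structure carving $\vert A;\Sigma\vert$ out of $\vert A;\Sigma\vert^\circ$, and in particular verifying that this structure is compatible with all the tensor decompositions above; once the factorization $p(A\sqcup A',A\sqcup A')=p(A,A)\otimes p(A',A')$ is in hand—essentially by computing projectors on product cylinders via Theorem~\ref{thm-state-3man+r}—the remaining coherence axioms follow formally from the definitions.
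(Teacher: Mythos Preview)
Your proposal is correct and follows the same path the paper takes: the paper's ``proof'' consists of the single sentence ``The results above imply the following theorem,'' treating the verification of the symmetric monoidal functor axioms as routine once functoriality~\eqref{eq-func}, the projector construction of $\vert\Sigma\vert_\cc$, and the factorization over disjoint unions are in place. You have simply spelled out those routine verifications explicitly, with the key ingredients (Lemma~\ref{lem-skeletons81}, the projector identity $p(A\sqcup A',A\sqcup A')=p(A,A)\otimes p(A',A')$, and the multiplicativity of $H$ and $\inv_\cc$ under disjoint union) correctly identified.
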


The HQFT $\vert \cdot \vert_\cc$ is called the {\it state sum HQFT}
derived from $\cc$.  Considered up to isomorphism, the HQFT $\vert
\cdot \vert_\cc$ does not depend on the choice of the representative
set $I$ of simple objects of $\cc$. For a  closed $G$-manifold~$M$,
the scalar $\vert M\vert_\cc \in \kk$ produced by this HQFT is
precisely the invariant  of Section~\ref{state-suminvariabnts
ofclosed}.

\subsection{Example} By \cite[Section I.2.1]{Tu1}, every  $\theta \in
H^3(G,\kk^*)=H^3(\XX;\kk^*)$   defines a 3-dimensional HQFT
$\tau^\theta$ with target $\XX$, called  a  primitive cohomological
HQFT. In particular,   $\tau^\theta(\Sigma)\cong \kk$ for any
$G$-surface $\Sigma$  and $\tau^\theta(M,g)=g^*(\theta)([M])$ for
any closed $G$-manifold $(M,g\co M \to \XX)$, where $[M]$ is the
fundamental class of $M$.
 On the other hand,  the spherical $G$-fusion category
$\mathrm{vect}_G^\theta$ of Example \ref{exaexa} defines a state-sum
HQFT $|\cdot|_{\mathrm{vect}_G^\theta}$ with target $\XX$. It can be
shown that these
  HQFTs are isomorphic:
$ \tau^\theta \cong |\cdot|_{\mathrm{vect}_G^\theta}$. This shows
that the HQFT $\tau^\theta$ may be fully computed via a state-sum on
$G$-skeletons.

\section*{Appendix. Push-forwards of categories and  HQFTs}\label{sect-push}

Let $\phi \co H \to G$ be a group  epimorphism  with finite kernel.
Every      $H$-category  $\cc=\oplus_{h \in H}\, \cc_h$ determines a
$G$-category $\phi_*(\cc)=\oplus_{g \in G} \, \phi_*(\cc)_g$ called
the \emph{push-forward  of~$\cc$}. By definition, $\phi_*(\cc)=\cc$
as  pivotal categories and $\phi_*(\cc)_g=\oplus_{h \in
\phi^{-1}(g)} \, \cc_h$ for all $g\in G$. If $\cc$ is $H$-fusion,
then $\phi_*(\cc)$ is $G$-fusion and $\dim(\phi_*(\cc)_1)= \gamma
\dim(\cc_1)$ where $ \gamma  $ is the order of $\Ker (\phi)$. If
$\cc$ is spherical, then so is $\phi_*(\cc)$.

Suppose   that   $\cc$ is a spherical $H$-fusion category and
$\gamma  \dim( \cc_1)\in \kk^*$. Then   $\cc$ and $\phi_*(\cc)$
determine   HQFTs $\vert \cdot \vert_{\cc}$ and $\vert \cdot
\vert_{\phi_*(\cc)}$ with targets $\YY=K(H,1)$ and~$\XX=K(G,1)$
respectively.  One can directly compute $\vert \cdot
\vert_{\phi_*(\cc)}$   from   $\vert \cdot \vert_{\cc}$ in terms of
the map $\widetilde  \phi:\YY\to \XX$ inducing  $\phi$ in $\pi_1$.
In particular, for any $G$-surface $(\Sigma,f\co \Sigma \to \XX)$
and for any closed connected $G$-manifold $(M, g\co M \to \XX)$,
$$
\vert\Sigma,f\vert_{\phi_*(\cc)}=\!\!\!\!\!\!\!\bigoplus_{ {\widetilde{f} \in [\Sigma,\YY],\,  \widetilde{\phi}\widetilde{f}=f}}
\!\!\!\!\!\!\! \vert\Sigma,\widetilde{f}\vert_{ \cc } \quad \text{and} \quad
\vert M,g\vert_{\phi_*(\cc)} =\gamma^{-1} \!\!\!\!\!\!\!\!\!\!\sum_{ {\widetilde{g} \in [M,\YY],\,  \widetilde{\phi}\widetilde{g}=g}} \!\!\!\!\!\!\!\! \vert M,\widetilde{g}\vert_{ \cc },
$$
where $[\Sigma,\YY]$ (resp.\@ $[M,\YY]$) denotes the set of homotopy
classes of maps $\Sigma \to \YY$ (resp.\@  $M\to \YY$).  It is
understood that the maps carry the set of base points of $\Sigma $
(resp.\@ a distinguished point of $  M $) to the base point of $\YY$
and the homotopies are constant on this set. The sums above are
finite because $\Ker (\phi)$ is finite. Similar formulas hold for
$G$-cobordisms.

We point out a special case of this construction. Any spherical
fusion category $\cc$
  can be viewed as  a spherical $\Gamma$-fusion category where $\Gamma$ is the graduator of $\cc$, see Section \ref{remarem}. Denote the resulting spherical $\Gamma$-fusion category by $\widetilde \cc$. Clearly, the group $\Gamma$ is finite and $\cc=\phi_*(\widetilde \cc) $ for
   the trivial   homomorphism $\phi\co\Gamma \to \{1\}$.
If $\dim(\cc)\in\kk^*$, then   $\cc$ gives rise to a state-sum TQFT
$\vert \cdot \vert_{\cc}$ and   $\widetilde \cc$ gives rise to a
state-sum HQFT $\vert \cdot \vert_{\widetilde \cc}$ with target
$K(\Gamma,1)$.
  For any  closed connected
oriented surface $\Sigma$ and any closed connected oriented
3-manifold $M$,
\begin{equation*}
|\Sigma|_\cc=\!\!\!\!\!\!\! \bigoplus_{f \co \pi_1(\Sigma) \to \Gamma} \!\!\!\!\!\!\! |\Sigma,f|_{\widetilde \cc } \qquad \text{and} \qquad
|M|_\cc=   \vert \Gamma\vert^{-1} \!\!\!\!\!\!\!\sum_{g \co \pi_1(M) \to \Gamma} \!\!\!\!\!\!\!  |M,g|_{\widetilde \cc }.
\end{equation*}
For more on this, see \cite{Petit}.

\end{document}